\def\NAT@def@citea{\def\@citea{\NAT@separator}}
\theoremstyle{plain}
\newtheorem{theorem}{Theorem}[section]
\newtheorem{corollary}[theorem]{Corollary}
\newtheorem{proposition}[theorem]{Proposition}
\theoremstyle{definition}
\newtheorem{definition}[theorem]{Definition}
\newtheorem{example}[theorem]{Example}
\theoremstyle{remark}
\begin{document}
	
	
	\title{On relationships between vector variational inequalities and optimization problems using convexificators on Hadamard manifold}
	
	\author{
		\name{Nagendra Singh\textsuperscript{a}\thanks{ Email: gg2763@myamu.ac.in; singh.nagendra096@gmail.com}, Akhlad Iqbal \textsuperscript{a*}\thanks{ *Corresponding author: Email:  akhlad6star@gmail.com, akhlad.mm@amu.ac.in} and Shahid Ali \textsuperscript{a}\thanks{ Email: shahid\_rrs@yahoo.com}}
		\affil{\textsuperscript{a}Department of Mathematics, Aligarh Muslim University Aligarh, India}
	}

	\maketitle
	
	\begin{abstract}
		\noindent
		An important concept of convexificators has been extended to Hadamard manifolds in this paper. The mean value theorem for convexificators on the Hadamard manifold has also been derived. Monotonicity of the bounded convexificators has been discussed and an important characterization for the bounded convexificators to be $\partial_{*}^{*}$-geodesic convexity has been derived. Furthermore, a vector variational inequalities problem using convexificators on Hadamard manifold has been considered. In addition, the necessary and sufficient conditions for vector optimization problem in terms of Stampacchia and Minty type partial vector variational inequality problem ($\partial_{*}^{*}$-VVIP) has been derived.
	\end{abstract}
	\begin{keywords}
		Geodesic convexity, Monotonicity, $\partial_{*}^{*}$-VVIP, VOP, Hadamard manifold
	\end{keywords}
	{\bf{AMS SUBJECT CLASSIFICATIONS}}
	\noindent
	47J20, 49J52, 53C22, 58E17
	\section{Introduction}
	
	In 1980, Giannessi \cite{giannessi1980theorems} described the variational inequality problems (VIP) in vector form using Stampacchia \cite{stampacchia1964formes} and Minty \cite{minty1967generalization} Euclidean space setup. He also showed relations between solutions of Minty vector variational inequalities and efficient solution of a differential convex vector optimization problems. Since, then relations between nonsmooth vector variational inequalities and non-smooth vector optimization problem has been shown intensively, see \cite{ansari2017nonsmooth,mishra2016minty}. In 1994, Demyanov \cite{demyanov1994convexification} proposed the concept of convexificators in order to the generalization of upper convex and lower concave approximations. Later, Demyanov and Jeykumar \cite{demyanov1997hunting} evaluated convexificators for positively homogeneous and locally Lipschitz functions. Jeykumar and Luc \cite{jeyakumar1999nonsmooth} defined non-compact convexificators and presented several calculus rules for calculating convexificators. For more details related to the convexificators see \cite{demyanov2000exhausters} and the references therein. Laha and Mishra \cite{laha2017vector} studied the convexity for vector valued functions in terms of convexificators and the monotonicity of the corresponding convexificators. Moreover, Laha et al. \cite{laha2017vector} formulated the vector VIP of Stampacchia and Minty type using convexificators on Euclidean spaces.\\
	
	\noindent
	In other way, numerous methods and techniques of non-linear analysis have been expanded from linear space to non-linear spaces, in particularly on Hadamard manifolds. In order to go for further in the details of the convex theory, the variational inequality and related topics, see \cite{sakai1992riemannian,udriste2013convex,singh2023bi,ferreira2005singularities,nemeth2003variational,chen2016vector}. Nemeth \cite{nemeth2003variational} extended the VIP on Hadamard manifolds and also studied their existence. Later, Chen and Huang \cite{chen2016vector} shown the relations between vector variational inequality problems (VVIP's) and vector optimization problems (VOP's) on Hadamard manifolds. Further, Chen \cite{chen2020existence} and Jayswal et. al. \cite{jayswal2021vector} investigated the extension of solutions to the problem of this kind under appropriate conditions.\\
	
	\noindent
	Most of the well-known sub-differentials including the Clark, Michel Penot and Mardukhovich sub-differentials may strictly include the convex hull of the convexificators of the locally Lipschitz functions, which provide sharper results for VOP in terms of convexificators, see \cite{giannessi1980theorems,giannessi1998minty,demyanov1994convexification,ferreira2005singularities,laha2017vector,mishra2016minty}.Motivated by the above work, we define the concept of convexificator on Hadamard manifold and discuss several rules for monotonicity of $ \partial_{*}^{*}f$ and $ \partial_{*}^{*}$-convexity. We prove the mean value theorem using convexificators on Hadamard manifold. Afterward, we extend the VVIP to Hadamard manifold using convexificators and further use it as a tool for finding the solution of VOP.
	\noindent
	\section{Preliminaries}
	In this section, firstly we recall some concepts, definitions, properties and results about Riemannian manifolds which help us in proving several important characterizations. For more study of Riemannian manifolds, see: \cite{boothby2003introduction,sakai1992riemannian,udriste2013convex}.\\
	\noindent 
	Let $\mathbb{R}^{m}$ be a $m$-dimensional Euclidean space and $ \mathbb{R}_{+}^{m}$ be its non-negative orthant.\\
	Let $ p=(p_{1},p_{2},...,p_{m}) $ and $ q=(q_{1},q_{2},...,q_{m}) $ be the two vectors in $ \mathbb{R}^{m} $. Then,
	\begin{eqnarray*}
		&&p\leqq q\Leftrightarrow p_{l}\leq q_{l}\quad \mbox{for}~l=1,2,...,m\quad\Leftrightarrow p-q\in -\mathbb{R}_{+}^{m};\\
		&&p\leq q \Leftrightarrow p_{l}\leq q_{l}\quad \mbox{for}\quad l=1,2,...,m \quad \mbox{and} \quad p\neq q \quad\Leftrightarrow p-q \in -\mathbb{R}_{+}^m;\\&&
		p<q \Leftrightarrow p_{l}< q_{l}\quad \mbox{for}\quad l=1,2,...,m \quad\Leftrightarrow p-q \in -int\mathbb{R}_{+}^m;
	\end{eqnarray*}
	\begin{definition}{\rm \cite{jeyakumar1999nonsmooth}}
		Let $\Psi:\mathbb{R}^{m}\longrightarrow \mathbb{R}\cup\{+\infty\}$ such that for $p\in \mathbb{R}^{m},$ $\Psi(p)$ be finite. The lower and upper Dini derivative of $\Psi$ at $p$ in the given direction of $w\in \mathbb{R}^{m}$ are defined respectively, as follows
		\begin{eqnarray*}
			\Psi^{-}(p,w):=\liminf_{t\downarrow 0}\frac{\Psi(p+tw)-\Psi(p)}{t}
		\end{eqnarray*}
		\begin{eqnarray*}
			\Psi^{+}(p,w):=\limsup_{t\downarrow 0}\frac{\Psi(p+tw)-\Psi(p)}{t}
		\end{eqnarray*}
	\end{definition}
 \begin{definition}{\rm \cite{jeyakumar1999nonsmooth}}
 	Let $\Psi:\mathbb{R}^{m}\longrightarrow \mathbb{R}\cup \{+\infty\}$ such that for $p\in \mathbb{R}^{m},$ $\Psi(p)$ be finite. Then, the function $\Psi$ is said to have:
 	\begin{enumerate}
 		\item an upper convexificator $\partial^{*}\Psi(p)\subset\mathbb{R}^{m}$ at $p\in \mathbb{R}^{m}$, iff $\partial^{*}\Psi(p)$ is closed and for each $w\in \mathbb{R}^{m}$, one has
 		\begin{eqnarray*}
 			\Psi^{-}(p;w)\leq \sup_{\xi\in\partial^{*}\Psi(p)}\left\langle\xi, w\right\rangle.
 		\end{eqnarray*}
 		\item  a lower convexificator $\partial_{*}\Psi(p)\subset\mathbb{R}^{m}$ at $p\in \mathbb{R}^{m}$, iff $\partial_{*}\Psi(p)$ is closed and for each $w\in \mathbb{R}^{m}$, one has
 		\begin{eqnarray*}
 			\Psi^{+}(p;w)\geq \inf_{\xi\in\partial_{*}\Psi(p)}\left\langle\xi, w\right\rangle.
 		\end{eqnarray*}
 		\item  a convexificator $\partial^{*}_{*}\Psi(p)\subset\mathbb{R}^{m}$ at $p\in \mathbb{R}^{m}$, iff $\partial^{*}_{*}\Psi(p)$ is both upper and lower convexificators of $\Phi$ at $p$.  
 	\end{enumerate}
 \end{definition}
 \noindent
 Let $ \mathcal{M} $ be $m$-dim Riemannian manifold with Levi-civita  (or Riemannian) connection $ \nabla $. The scalar product on $ T_{p}\mathcal{M} $ with the norm $\| \cdot\|$ is denoted by $\left\langle \cdot,\cdot\right\rangle $.\\
 For any $ p,q\in \mathcal{M} $, Let $ \gamma_{pq} :[0,1]\longrightarrow{\mathcal{M}} $ be a piece-wise smooth curve joining $ p $ to $ q $. Then the arc length of $ \gamma_{pq}(t) $ is
 \begin{equation*}
 	L(\gamma_{pq}):= \int_{0}^{1}\|\dot\gamma_{pq}(t)\|dt
 \end{equation*}
 where $ \dot\gamma_{pq}(t) $ is the tangent vector to the curve $ \gamma_{pq} $.\\
 \noindent
 If a smooth curve satisfies the conditions
 $\gamma_{pq}(0)=p~,\gamma_{pq}(1)=q~~\mbox{and}~ \nabla_{\dot\gamma_{pq}}{\dot\gamma_{pq}}=0~\mbox{on}~[0,1]$,
 called geodesic on manifold. If we take two points $p, w\in \mathcal{M}$, $ P_{w,p} $ denotes the parallel transport from $ T_{p}\mathcal{M} $ to $ T_{w}\mathcal{M} $. \\
 By the Hopf-Rinow theorem, we know that, if any two points on $ \mathcal{M} $ can be joined by a minimal geodesic then $ \mathcal{M} $ is complete Riemannian manifold and arc-length of the geodesic is called Riemannian distance between $ p ~\mbox{and} ~q $ and it is defined as $ d(p,q)= \inf_{\gamma_{pq}}L(\gamma_{pq}) $.\\
 Now, recall that a function $ \Psi:\mathcal{M}\longrightarrow\mathbb{R} $ is said to be Lipschitz on the given subset $ \mathcal{K} $ of $\mathcal{M}$ if $\exists$ a $ \lambda\geq 0 ,$ such that 
 \begin{eqnarray*}
 	|\Psi(p)-\Psi(q)|\leq\lambda d(p,q),~~\forall~p,q\in \mathcal{K}.
 \end{eqnarray*}
 A function $ \Psi:\mathcal{M}\longrightarrow\mathbb{R} $ is said to be locally Lipschitz function at point $ p_{o}\in \mathcal{M} $, if $\exists$ $ \lambda(p_{o})\geq 0 $ such that the above inequality satisfies with $ \lambda=\lambda(p_{o}) $ for any $ p,~q $ in a neighborhood of $ p_{o} $.
 Let us start some basic definitions of generalized derivative for locally Lipschitz function on $\mathcal{M}$.\\
 \begin{definition}{\rm \cite{motreanu1982quasi}}
 	Let $\Psi:\mathcal{M}\longrightarrow \mathbb{R}$ be a locally Lipschitz function. Let $ p, q\in \mathcal{M} $, the generalized directional derivative $\Psi^{o}(p;v)$ of $\Psi$ at point $ p $ in the direction $ v\in T_{p}\mathcal{M} $ defined as
 	\begin{eqnarray*}
 		\Psi^{\circ}(p;v)=\limsup_{q\rightarrow p, t\downarrow 0,q\in \mathcal{M}}\frac{\Psi \circ\Phi^{-1}(\Phi(q)+td\Phi(p)(v))-\Psi \circ\Phi^{-1}(\Phi(q))}{t}
 	\end{eqnarray*} 
 	where $\Phi:U\subseteq\mathcal{M}\longrightarrow\Phi(U)\subseteq\mathbb{R}^{m}$ be a homeomarphism, that is $ (U, \Phi) $ is the chart about the point $ p $.
 \end{definition}
 \begin{definition}{\rm \cite{motreanu1982quasi}}
 	Let $ \Psi:\mathcal{M}\longrightarrow \mathbb{R} $ be a locally Lipschitz function on Riemannian manifold. Then, the generalized gradient of $ \Psi $ at the point  $ q\in \mathcal{M} $ is the subset $ \partial_{c}\Psi(q) $ of $ T_{q}^{*}\mathcal{M}\cong T_{q}\mathcal{M}$ defined as 
 	\begin{eqnarray*}
 		\partial_{c}\Psi(q)=\{\xi\in T_{q}\mathcal{M}:\Psi^{\circ}(q;v)\geq\left\langle\xi,q\right\rangle,~~\forall v\in T_{q}\mathcal{M}\}.
 	\end{eqnarray*}
 	
 \end{definition}
 \begin{definition}{\rm\cite{jost1997nonpositive}(Hadamard manifold):} 
 	A complete, simply connected Riemannian manifold which has non-positive sectional curvature is called Hadamard manifold, and we denote it by $\mathbb{H}$ through out the paper.
 \end{definition}
 \noindent
 \begin{proposition}{\rm \cite{docarmo1992riemannian}}
 	Let $p$ be any point of Hadamard manifold $ \mathbb{H} $. Then, $ \exp_{p}:T_{p}\mathbb{H}\longrightarrow \mathbb{H} $ is a diffeomorphism. And for any $ p,q\in \mathbb{H} $, there exists a unique minimal geodesic $ \gamma_{pq}$ joining $ p $ to $ q $ such that 
 	\begin{equation*}
 		\gamma_{pq}(t)=\exp_{p}(t\exp_{p}^{-1}q),\quad\forall~ t\in [0,1].
 	\end{equation*}  
 \end{proposition}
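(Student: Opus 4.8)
The plan is to recover the statement as the classical Cartan-Hadamard theorem, proved in three stages: first that $\exp_{p}$ is everywhere a local diffeomorphism, then that it is a covering map, and finally that simple connectedness upgrades it to a global diffeomorphism; the geodesic formula together with its uniqueness and minimality then follows immediately.

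First I would invoke completeness and the Hopf-Rinow theorem to guarantee that $\exp_{p}$ is defined on all of $T_{p}\mathbb{H}$. The decisive use of the curvature hypothesis comes next. Along any geodesic $\gamma$ of $\mathbb{H}$, let $J$ be a Jacobi field with $J(0)=0$; using the Jacobi equation $J''=-R(J,\dot\gamma)\dot\gamma$, where $R$ is the Riemann curvature tensor, one computes
\begin{eqnarray*}
\frac{d^{2}}{dt^{2}}\|J(t)\|^{2}=2\|J'(t)\|^{2}+2\left\langle J''(t),J(t)\right\rangle=2\|J'(t)\|^{2}-2\left\langle R(J(t),\dot\gamma(t))\dot\gamma(t),J(t)\right\rangle\geq 0,
\end{eqnarray*}
the last inequality being exactly the non-positivity of the sectional curvature of $\mathbb{H}$. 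Hence $t\mapsto\|J(t)\|^{2}$ is convex and, vanishing at $t=0$, cannot vanish again unless $J\equiv 0$; that is, $\mathbb{H}$ possesses no conjugate points. Since $\exp_{p}(v)$ is conjugate to $p$ along $t\mapsto\exp_{p}(tv)$ precisely when $d(\exp_{p})_{v}$ is singular, it follows that $d(\exp_{p})_{v}$ is an isomorphism for every $v\in T_{p}\mathbb{H}$, so $\exp_{p}$ is a local diffeomorphism.

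Then I would pull back the metric of $\mathbb{H}$ by $\exp_{p}$ to obtain a Riemannian metric $\exp_{p}^{*}g$ on $T_{p}\mathbb{H}$ making $\exp_{p}$ a local isometry. The Euclidean rays $t\mapsto tv$ issuing from the origin are carried to geodesics of $\mathbb{H}$, so all geodesics of $(T_{p}\mathbb{H},\exp_{p}^{*}g)$ through the origin extend to all of $\mathbb{R}$, and Hopf-Rinow then makes $(T_{p}\mathbb{H},\exp_{p}^{*}g)$ complete. A local isometry from a complete Riemannian manifold onto a connected Riemannian manifold is automatically a covering map (one lifts paths by lifting geodesics piecewise). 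As $\mathbb{H}$ is simply connected and $T_{p}\mathbb{H}$ is connected, this covering is one-sheeted, so $\exp_{p}$ is bijective, hence a diffeomorphism.

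Finally, given $p,q\in\mathbb{H}$, set $v:=\exp_{p}^{-1}q\in T_{p}\mathbb{H}$ and define $\gamma_{pq}(t):=\exp_{p}(tv)$, $t\in[0,1]$; this is a geodesic with $\gamma_{pq}(0)=p$ and $\gamma_{pq}(1)=q$. Any geodesic $\sigma$ from $p$ to $q$ has the form $\sigma(t)=\exp_{p}(tw)$ with $w=\dot\sigma(0)$, and $\exp_{p}(w)=q=\exp_{p}(v)$ forces $w=v$ by injectivity, which gives uniqueness; minimality then follows because Hopf-Rinow supplies some minimizing geodesic from $p$ to $q$, which by uniqueness must be $\gamma_{pq}$. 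I expect the main obstacle to be the covering-map step, i.e.\ converting the pointwise local-diffeomorphism property into a global bijection --- this is the point at which completeness (via geodesic lifting) and the topological hypothesis of simple connectedness genuinely do the work.
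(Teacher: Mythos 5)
This proposition is quoted in the paper as a known result (the Cartan--Hadamard theorem, cited from do Carmo) and no proof is given there; your argument is precisely the standard one from that reference --- Jacobi fields and nonpositive curvature rule out conjugate points, the pullback metric plus Hopf--Rinow makes $\exp_{p}$ a covering map, and simple connectedness forces it to be a diffeomorphism, after which uniqueness and minimality of $\gamma_{pq}$ are immediate. The proof is correct and complete as written.
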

 \begin{definition}{\rm \cite{udriste2013convex}}
 	A set $ \mathcal{K}\subseteq\mathbb{H} $ is said to be geodesic convex (GC) if for any two points $ p,q\in \mathcal{K}$, $\exp_{x}t\exp_{p}^{-1}q\in \mathcal{K}$.
 \end{definition}
 \begin{definition}{\rm \cite{udriste2013convex}}
 	Suppose $ \mathcal{K}\subseteq\mathbb{H} $ is a GC set. Then $ \Psi:\mathcal{K}\longrightarrow\mathbb{R} $ is said to be convex function if for every $ p,q\in \mathcal{K} $,
 	\begin{eqnarray*}
 		\Psi(\exp_{p}t\exp_{p}^{-1}q)\leq t\Psi(p)+(1-t)\Psi(q),~~\forall ~t\in[0,1].
 	\end{eqnarray*}
 \end{definition}
 \begin{definition}{\rm \cite{ansari2017nonsmooth}}
 	Let $\Psi:\mathbb{H}\longrightarrow\bar{\mathbb{R}}:= \mathbb{R}\cup\{+\infty\}$ be an extended real valued function on $\mathbb{H}$ and $ p $ be a point where $ \Psi$ is finite.
 	\begin{enumerate}
 		\item The Dini-lower directional derivative at point $ p\in \mathbb{H} $ in the direction $ v\in T_{p}\mathbb{H} $ is defined as 
 		\begin{eqnarray*}
 			\Psi^{-}(p;v):=\liminf_{t\rightarrow 0^{+}}\frac{\Psi(\exp_{p}tv)-\Psi(p)}{t}.
 		\end{eqnarray*}
 		\item  The Dini-upper directional derivative at point  $ p\in \mathbb{H} $ in the direction $ v\in T_{p}\mathbb{H} $ is defined as
 		\begin{eqnarray*}
 			\Psi^{+}(p;v):=\limsup_{t\rightarrow 0^{+}}\frac{\Psi(\exp_{p}tv)-\Psi(p)}{t}.
 		\end{eqnarray*}
 	\end{enumerate}
 \end{definition}
 \noindent
 As discussed in \cite{ansari2017nonsmooth}, for a fixed $ s\in (0,1) $, we take a point $ w= \gamma_{pq}(s)=\exp_{p}(s\exp_{p}^{-1}q) $ on the geodesic $ \gamma_{pq}:[0,1]\longrightarrow H $, which divides the geodesic into two parts, the first part can be written as
 \begin{equation*}
 	\gamma_{wp}(t)=\gamma_{pq}(-st+s)=\exp_{p}(-st+s)\exp_{p}^{-1}q,\quad\forall\quad t\in [0,1];
 \end{equation*}
 that is,
 \begin{equation}\label{e1}
 	\exp_{w}(t\exp_{w}^{-1}p)=\exp_{p}(-st+s)\exp_{p}^{-1}q,\quad\forall\quad t\in [0,1];
 \end{equation}
 and the second part can be written as 
 \begin{equation*}
 	\gamma_{wq}=\gamma_{pq}((1-s)t+s)=\exp_{p}(((1-s)t+s)\exp_{p}^{-1}q),\quad\forall\quad t\in [0,1];
 \end{equation*}
 that is,
 \begin{equation}\label{e2}
 	\exp_{w}(t\exp_{w}^{-1}q)=\exp_{p}(((1-s)t+s)\exp_{p}^{-1}q),\quad\forall\quad t\in [0,1].
 \end{equation}
 From (\ref{e1}) and (\ref{e2}), we get
 \begin{equation}\label{e3}
 	\exp_{w}^{-1}p=-sP_{w,p}\exp_{p}^{-1}q,
 \end{equation}
 \begin{equation}\label{e4}
 	\exp_{w}^{-1}q=(1-s)P_{w,p}\exp_{p}^{-1}q.
 \end{equation}
 Similarly, we have 
 \begin{equation}\label{e5}
 	\exp_{w}^{-1}p=sP_{w,q}\exp_{q}^{-1}p.
 \end{equation}
 
\section{Convexity and Monotonicity of convexificators}

In this section, we firstly prove the mean value theorem for convexificators on Hadamard manifold. We extend the notions of convexity and monotonicity of vector valued function using convexificators to Riemannian manifold particularly Hadamard manifold and establish some relations between them.
\begin{definition}
	Let $ \Psi:\mathbb{H}\longrightarrow\bar{\mathbb{R}} $ be a extended real-valued function, $ p\in\mathbb{H} $ and let $ \Psi(p) $ be finite.
	\begin{enumerate}
		\item The function $ \Psi $ is said to have an upper convexificator $ \partial^{*}\Psi(p)\subset T_{p}\mathbb{H} $ at a point $ p\in \mathbb{H} $, iff $ \partial^{*}\Psi(p) $ is closed and for each $ v\in T_{p}\mathbb{H} $ such that
		\begin{eqnarray*}
			\Psi^{-}(p;v)\leq \sup_{\xi\in\partial^{*}\Psi(x)}\left\langle\xi;v \right\rangle
		\end{eqnarray*}
		\item The function $ \Psi $ is said to have a lower convexificator $ \partial_{*}\Psi(p)\subset T_{p}\mathbb{H} $ at point $ p\in \mathbb{H} $, iff $ \partial_{*}\Psi(p) $ is closed and for each $ v\in T_{p}\mathbb{H} $ such that
	\begin{eqnarray*}
		\Psi^{+}(p;v)\geq \inf_{\xi\in\partial_{*}\Psi(p)}\left\langle\xi;v \right\rangle
	\end{eqnarray*}
	\item The function $ \Psi $ is said to have a convexificator 
	$\partial_{*}^{*}\Psi(p)\subset T_{p}\mathbb{H}$ at point $p\in \mathbb{H}$, iff $\partial_{*}^{*}\Psi(p)$ is both upper and lower convexificators of $\Psi$ at $p$.
\end{enumerate}
\end{definition}
\begin{theorem}{\bf [Mean Value Theorem]}\label{[Mean Value Theorem]}
	Suppose $ \mathcal{K}(\neq\phi)\subseteq\mathbb{H} $ is a GC set. Let $ p,q\in \mathcal{K} $ and let $\Psi:\mathcal{K}\longrightarrow \bar{\mathbb{R}}:=\mathbb{R}\cup\{-\infty, +\infty\}$ be finite and continuous. Suppose that, for each $t\in (0,1) $, $ z(t):=\exp_{p}(t\exp_{p}^{-1}q) $, $\partial^{*}\Psi(z)~\mbox{and}~\partial_{*}\Psi(z)$ are respectively upper and lower convexificators of $\Psi$. Then, there exists $ w(t)\in (p,q) $ and a sequence $\{\xi_{k}\}\subset \mbox{co}(\partial^{*}\Psi(w)\cup\partial_{*}\Psi(w))$ such that
	\begin{equation*}
		\Psi(q)-\Psi(p)=\lim_{k\rightarrow \infty}\left\langle\xi_{k};P_{w,p}\exp_{p}^{-1}q\right\rangle;
	\end{equation*}
	or
	\begin{equation*}
		\Psi(q)-\Psi(p)=\left\langle\xi;P_{w,p}\exp_{p}^{-1}q\right\rangle;
	\end{equation*}
\end{theorem}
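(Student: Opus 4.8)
The plan is to adapt the classical Jeyakumar--Luc mean value theorem for convexificators, replacing the Euclidean segment $[p,q]$ by the geodesic $z(t)=\exp_{p}(t\exp_{p}^{-1}q)$ and each Euclidean displacement $q-p$ by its parallel transport $P_{w,p}\exp_{p}^{-1}q$ into the relevant tangent space. First I would introduce the auxiliary scalar function $\varphi:[0,1]\to\mathbb{R}$, $\varphi(t):=\Psi(z(t))+t\,(\Psi(p)-\Psi(q))$. Since $\Psi$ is finite and continuous and $z$ is smooth (by the Proposition on $\exp_{p}$), $\varphi$ is continuous on the compact interval $[0,1]$ and $\varphi(0)=\Psi(p)=\varphi(1)$. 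Hence $\varphi$ attains a local extremum at some interior $t_{0}\in(0,1)$: if $\varphi$ is constant every interior point works, and otherwise $\max_{[0,1]}\varphi$ or $\min_{[0,1]}\varphi$ differs from the common endpoint value and is therefore attained in $(0,1)$. Put $w:=z(t_{0})\in(p,q)$ and $v:=P_{w,p}\exp_{p}^{-1}q\in T_{w}\mathbb{H}$.

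Next I would recentre the geodesic at $w$: applying (\ref{e3}) and (\ref{e4}) with the parameter equal to $t_{0}$ gives $\exp_{w}^{-1}p=-t_{0}v$ and $\exp_{w}^{-1}q=(1-t_{0})v$, so that $z(t_{0}+\tau)=\exp_{w}(\tau v)$ for $\tau\in[0,1-t_{0}]$ and $z(t_{0}-\tau)=\exp_{w}(\tau(-v))$ for $\tau\in[0,t_{0}]$. Consequently the one-sided difference quotients of $\Psi\circ z$ at $t_{0}$ are exactly difference quotients of $\Psi$ at $w$ along $\pm v$, and their $\limsup$ (resp.\ $\liminf$) as $\tau\downarrow0$ equal $\Psi^{+}(w;\pm v)$ (resp.\ $\Psi^{-}(w;\pm v)$).

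Then I would pass from extremality to the conclusion. If $t_{0}$ is a local maximiser of $\varphi$, then $\limsup_{\tau\downarrow0}\tau^{-1}(\varphi(t_{0}+\tau)-\varphi(t_{0}))\le 0$ and $\liminf_{\tau\downarrow0}\tau^{-1}(\varphi(t_{0})-\varphi(t_{0}-\tau))\ge 0$; expanding $\varphi$ and using the recentring gives $\Psi^{+}(w;v)\le \Psi(q)-\Psi(p)$ and $\Psi^{+}(w;-v)\le \Psi(p)-\Psi(q)$. Feeding these into the lower convexificator inequality $\Psi^{+}(w;u)\ge\inf_{\xi\in\partial_{*}\Psi(w)}\langle\xi;u\rangle$ for $u=v$ and $u=-v$ yields
\begin{equation*}
\inf_{\xi\in\partial_{*}\Psi(w)}\langle\xi;v\rangle\ \le\ \Psi(q)-\Psi(p)\ \le\ \sup_{\xi\in\partial_{*}\Psi(w)}\langle\xi;v\rangle .
\end{equation*}
If $t_{0}$ is a local minimiser, the reversed inequalities for $\varphi$ give $\Psi^{-}(w;v)\ge\Psi(q)-\Psi(p)$ and $\Psi^{-}(w;-v)\ge\Psi(p)-\Psi(q)$, and the upper convexificator inequality $\Psi^{-}(w;u)\le\sup_{\xi\in\partial^{*}\Psi(w)}\langle\xi;u\rangle$ produces the same sandwich with $\partial_{*}\Psi(w)$ replaced by $\partial^{*}\Psi(w)$. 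In either case $\Psi(q)-\Psi(p)$ lies between the infimum and supremum of $\xi\mapsto\langle\xi;v\rangle$ over a subset of $C:=\mbox{co}(\partial^{*}\Psi(w)\cup\partial_{*}\Psi(w))$, hence over $C$. Since $C$ is convex and $\xi\mapsto\langle\xi;P_{w,p}\exp_{p}^{-1}q\rangle$ is linear, $\{\langle\xi;v\rangle:\xi\in C\}$ is an interval whose closure contains $\Psi(q)-\Psi(p)$; if that value is attained we obtain $\xi\in C$ with $\Psi(q)-\Psi(p)=\langle\xi;P_{w,p}\exp_{p}^{-1}q\rangle$, and if it is only a limit point (which can occur since $\partial^{*}\Psi(w)$, $\partial_{*}\Psi(w)$ need not be bounded) we choose $\{\xi_{k}\}\subset C$ with $\langle\xi_{k};v\rangle\to\Psi(q)-\Psi(p)$, giving the limit form, with $w(t)=w=z(t_{0})$.

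The only genuinely delicate part is the bookkeeping in the two middle steps: one must track carefully which one-sided Dini derivative ($\limsup$ versus $\liminf$, at $t_{0}^{+}$ versus $t_{0}^{-}$) is controlled by extremality, how each transforms under $v\mapsto -v$, and therefore which convexificator, $\partial^{*}\Psi(w)$ or $\partial_{*}\Psi(w)$, is the correct one to invoke in the maximiser versus the minimiser case; the non-compactness of the convexificators is exactly what forces the "sequence or single element'' dichotomy in the statement. The geometric inputs (smoothness and the diffeomorphism property of $\exp_{p}$, and identities (\ref{e3})--(\ref{e4})) are what make the Euclidean argument transfer verbatim to $\mathbb{H}$.
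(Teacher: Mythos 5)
Your proposal is correct and follows essentially the same route as the paper: the same auxiliary function $t\mapsto\Psi(z(t))+t(\Psi(p)-\Psi(q))$ (up to an additive constant), the same interior-extremum argument, the same recentring of the geodesic at $w$ via the identities for $\exp_{w}^{-1}p$ and $\exp_{w}^{-1}q$, and the same passage through the Dini derivatives and the upper/lower convexificator inequalities to the sandwich $\inf\le\Psi(q)-\Psi(p)\le\sup$. If anything, your bookkeeping of which Dini derivative and which convexificator is used in the maximiser versus minimiser case, and the final interval/limit-point argument yielding the sequence $\{\xi_k\}$, is spelled out more carefully than in the paper's own proof.
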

\begin{proof}
	Consider a function $ \rho:[0,1]\longrightarrow\mathbb{R} $ such that 
	\begin{equation*}
		\rho(t):= \Psi(\exp_{p}t\exp_{p}^{-1}q)-\Psi(p)+t(\Psi(p)-\Psi(q)).
	\end{equation*}
	Here, $ \rho $ is continuous on $ [0,1] $ and $ \rho(0)=\rho(1)=0 $. Then, $\exists$ $\mu\in(0,1)$ such that $\mu$ is the extremum point of $\rho$. Define
	\begin{equation*}
		w(\mu)=\exp_{p}\mu\exp_{p}^{-1}q.
	\end{equation*}
	Without loss of generality, let $ \mu $ is the minimal point of $\rho$, then using the necessary condition of minimal point, for each $ v\in\mathbb{R} $,
	\begin{equation*}
		\rho_{d}^{-}(\mu;v)\geq0.
	\end{equation*}
	Since,
	\begin{equation*}
		\rho_{d}^{-}(\mu;v):=\liminf_{k\rightarrow0^{+}}\frac{\rho(\mu+kv)-\rho(\mu)}{k};
	\end{equation*}
	Therefore, we have
	\begin{equation*}
		\liminf_{k\rightarrow0^{+}}\frac{\Psi(\exp_{p}(\mu+kv)\exp_{p}^{-1}q)-\Psi(\exp_{p}\mu\exp_{p}^{-1}q)}{k}+v(\Psi(p)-\Psi(q))\geq0.
	\end{equation*}
	Since,
	\begin{eqnarray}\label{e6.}
		\exp_{p}(\mu + kv)\exp_{p}^{-1}q=\exp_{p}\left (-\mu\left(\frac{kv}{-\mu}\right)+\mu\right)\exp_{p}^{-1}q.
	\end{eqnarray}
	Now, suppose
	\begin{eqnarray*}
		\frac{kv}{-\mu}=\lambda\quad(\mbox{say}).
	\end{eqnarray*}
	Therefore, equation (\ref*{e6.}) becomes
\begin{eqnarray*}
	\exp_{p}(\mu + kv)\exp_{p}^{-1}q&=&\exp_{p}\left (-\mu\lambda+\mu\right)\exp_{p}^{-1}q=\gamma_{wp}(\lambda).\\
	&=&\exp_{w}\lambda\exp_{w}^{-1}p\\
	&=&\exp_{w}k\left(\frac{v}{-\mu}\right)\exp_{w}^{-1}p.
\end{eqnarray*}
Hence, from above inequality
\begin{eqnarray*}
	&&\liminf_{k\rightarrow0^{+}}\frac{\Psi \left(\exp_{w}k\left(\frac{v}{-\mu}\exp_{w}^{-1}p\right)\right)-\Psi(\exp_{p}\mu\exp_{p}^{-1}q)}{k}+v\left(\Psi(p)-\Psi(q)\right)\geq0.\\	
	&&\liminf_{k\rightarrow0^{+}}\frac{\Psi \left(\exp_{w}kv'\right)-\Psi(w)}{k}+v\left(\Psi(p)-\Psi(q)\right)\geq0.\\	
	&&\Psi_{d}^{-}(w;v')+v(\Psi(p)-\Psi(q))\geq0.\\	
	&&\Psi_{d}^{-}\left(w;\frac{v}{-\mu}\exp_{w}^{-1}p\right)+v(\Psi(p)-\Psi(q))\geq0.	
\end{eqnarray*}
We know that 
\begin{eqnarray*}
	-\frac{1}{\mu}\exp_{w}^{-1}p=P_{w,p}\exp_{p}^{-1}q.
\end{eqnarray*}
	This implies that
	\begin{equation*}
		\Psi_{d}^{-}(w;vP_{w,p}\exp_{p}^{-1}q)\geq v(\Psi(q)-\Psi(p));
	\end{equation*}
	Now, putting $ v=1~\mbox{and}~v=-1 $, we get
	\begin{equation*}
		-\Psi_{d}^{-}(w;P_{w,p}\exp_{p}^{-1}q)\leq\Psi(q)-\Psi(p)\leq\Psi_{d}^{-}(w;P_{w,p}\exp_{p}^{-1}q);
	\end{equation*}
	since $\partial^{*}\Psi(w)$ is an upper convexificator of $\Psi$ at $ w $, we have
	\begin{equation*}
		\inf_{\xi\in\partial^{*}\Psi(w)}\left\langle\xi; P_{w,p}\exp_{p}^{-1}q\right\rangle\leq\Psi(q)-\Psi(p)\leq\sup_{\xi\in\partial^{*}\Psi(w)}\left\langle\xi;P_{w,p}\exp_{p}^{-1}q\right\rangle.
	\end{equation*}
	Then, this inequality follows that $\exists$ a sequence \{$\xi_{k}\}\subset\mbox{co}(\partial^{*}\Psi)$ such that
	\begin{equation*}
		\Psi(q)-\Psi(p)=\lim_{k\rightarrow0}\left\langle\xi_{k};P_{w,p}\exp_{p}^{-1}q\right\rangle.
	\end{equation*}
	On the other hand, if $\mu$ is the maximal point of $\rho$, then using the same arguments as above, we get the conclusion. Hence,
	\begin{equation*}
		\Psi(q)-\Psi(p)=\left\langle\xi;P_{w,p}\exp_{p}^{-1}q\right\rangle.
	\end{equation*} 
\end{proof}
\begin{definition}
	Suppose $\mathcal{K}(\neq\phi)\subseteq\mathbb{H}$ is a GC set and $\Psi:\mathcal{K}\longrightarrow\mathbb{R}^{m}$ be a function such that $ \Psi_{i}:\mathcal{K}\longrightarrow\mathbb{R} $ is locally Lipschitz at $ \bar{p}\in \mathcal{K}\subseteq \mathbb{H} $ and admit a bounded convexificator $ \partial_{*}^{*}\Psi_{i}(\bar{p}) $ at a point $ \bar{p} $ for all $ \forall~i\in M=\{1,2,...m\}$. Then $ \Psi $ is said to be: 
	\begin{enumerate}
		\item $ \partial_{*}^{*}$-convex at point $ \bar{p} $ over $ \mathcal{K} $, iff for any $ p\in \mathcal{K} $ and $ \xi^{*}\in \partial_{*}^{*}\Psi(\bar{p}) $ such that
		\begin{equation*}
			\Psi(p)-\Psi(\bar{p})\geqq \left\langle\xi^{*}; \exp_{\bar{p}}^{-1}p \right\rangle_{m}.
		\end{equation*}
		\item strictly  $ \partial_{*}^{*}$-convex at point $ \bar{p} $ over $ \mathcal{K} $, iff for any $ p\in \mathcal{K} $ and $ \xi^{*}\in \partial_{*}^{*}\Psi(\bar{p}) $ 
		\begin{equation*}
			\Psi(p)-\Psi(\bar{p})~>~ \left\langle\xi^{*}; \exp_{\bar{p}}^{-1}p \right\rangle_{m}.
		\end{equation*}
	\end{enumerate}
	Where 
	\begin{equation*}
		\xi^{*}:= (\xi_{1}^{*},\xi_{2}^{*},...,\xi^{*}),
	\end{equation*}
	\begin{equation*}
		\partial_{*}^{*}\Psi(\bar{p}):= \partial_{*}^{*}\Psi_{1}(\bar{p})\times...\times \partial_{*}^{*}\Psi_{m}(\bar{p}),
	\end{equation*}
	\begin{equation*}
		\left\langle\xi^{*}; \exp_{\bar{p}}^{-1}p\right\rangle_{m}:= (\left\langle\xi_{1}^{*}; \exp_{\bar{p}}^{-1}p\right\rangle, \left\langle\xi_{2}^{*}; \exp_{\bar{p}}^{-1}p\right\rangle,...,\left\langle\xi_{m}^{*}; \exp_{\bar{p}}^{-1}p\right\rangle).
	\end{equation*}
\end{definition}
\begin{definition}
	Let $ \Psi:=(\Psi_{1}, \Psi_{2},..., \Psi_{m}):\mathcal{K}\longrightarrow\mathbb{R}^{m} $ be a vector valued function such that $ \Psi_{i}:\mathcal{K}\longrightarrow \mathbb{R} $ are locally Lipschitz on $ \mathcal{K}\subseteq \mathbb{H} $ and admit a bounded convexificator $ \partial_{*}^{*}\Psi_{i}(p) $ for all $ p\in \mathcal{K} $ and $\forall~i\in M=\{1,2,...m\} $. Then, $ \partial_{*}^{*}\Psi $ is said to be: 
	\begin{enumerate}
		\item monotone on $\mathcal{K}$, iff for any $ p,q\in \mathcal{K} $, $ \xi\in \partial_{*}^{*}\Psi(p) $ and $ \zeta\in \partial_{*}^{*}\Psi(q) $, one has
		\begin{equation*}
			\left\langle P_{q,p}\xi-\zeta; \exp_{q}^{-1}p \right\rangle_{m}\geqq0;
		\end{equation*}
		\item strictly monotone on $ \mathcal{K} $, iff for any $ p,q\in \mathcal{K} $, $\xi\in \partial_{*}^{*}\Psi(p) $ and $ \zeta\in \partial_{*}^{*}\Psi(q) $, one has
		\begin{equation*}
			\left\langle  P_{q,p}\xi-\zeta; \exp_{q}^{-1}q \right\rangle_{m}>0.
		\end{equation*}
	\end{enumerate}
\end{definition}
\noindent
In the following theorem, we discuss an important characterization of $\partial_{*}^{*}$-convex functions in the terms of monotonicity. 
\begin{theorem}\label{theorem3.4}
	Suppose $ \mathcal{K}(\neq\phi)\subseteq \mathbb{H} $ is a GC set and $ \Psi:\mathcal{K}\longrightarrow \mathbb{R}^{m} $ be a function such that $ \Psi_{i}:\mathcal{K}\longrightarrow \mathbb{R} $ are locally Lipschitz functions on $\mathcal{K}$ and  admit bounded convexificators $ \partial_{*}^{*}\Psi_{i}(p),~\mbox{$\forall$ $p\in \mathcal{K}$ and}~\forall~i\in M=\{1,2,...m\}$. Then, $ \Psi $ is $ \partial_{*}^{*}$-convex on $ \mathcal{K} $ iff $ \partial_{*}^{*}\Psi$ is monotone on $ \mathcal{K} $.
\end{theorem}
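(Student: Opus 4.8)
The plan is to establish the two implications separately, each reduced to a componentwise argument over $i\in M$, since both $\partial_*^*$-convexity and monotonicity of $\partial_*^*\Psi$ are defined coordinate by coordinate. The forward implication will be a one-line computation; the converse is where the Mean Value Theorem (Theorem~\ref{[Mean Value Theorem]}) does the real work.

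\emph{Necessity.} Assuming $\Psi$ is $\partial_*^*$-convex, I would fix $p,q\in\mathcal{K}$, $\xi\in\partial_*^*\Psi(p)$, $\zeta\in\partial_*^*\Psi(q)$ and an index $i$, write the convexity inequality at $p$ tested against $q$ and at $q$ tested against $p$,
\begin{equation*}
	\Psi_i(q)-\Psi_i(p)\geq\langle\xi_i;\exp_p^{-1}q\rangle,\qquad \Psi_i(p)-\Psi_i(q)\geq\langle\zeta_i;\exp_q^{-1}p\rangle,
\end{equation*}
add them, and invoke the geodesic identity $\exp_q^{-1}p=-P_{q,p}\exp_p^{-1}q$ (the limiting case $s\to1$ of~(\ref{e3})) together with the fact that $P_{q,p}$ is a linear isometry. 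This collapses to $\langle P_{q,p}\xi_i-\zeta_i;\exp_q^{-1}p\rangle\geq0$ for every $i$, i.e. $\langle P_{q,p}\xi-\zeta;\exp_q^{-1}p\rangle_m\geqq0$, which is exactly monotonicity of $\partial_*^*\Psi$.

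\emph{Sufficiency.} Assuming $\partial_*^*\Psi$ is monotone, I would fix $\bar p,p\in\mathcal{K}$, $\xi^*\in\partial_*^*\Psi(\bar p)$ and an index $i$. Along the geodesic $z(t)=\exp_{\bar p}(t\exp_{\bar p}^{-1}p)$ the set $\partial_*^*\Psi_i$ serves as both an upper and a lower convexificator, and $\Psi_i$ is continuous (being locally Lipschitz), so Theorem~\ref{[Mean Value Theorem]} applied to the segment from $\bar p$ to $p$ produces a point $w=z(\mu)$, $\mu\in(0,1)$, and an element $\eta_i\in\mathrm{co}(\partial_*^*\Psi_i(w))$ — or a limit of such elements in the sequential alternative — with $\Psi_i(p)-\Psi_i(\bar p)=\langle\eta_i;P_{w,\bar p}\exp_{\bar p}^{-1}p\rangle$. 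Writing $\eta_i=\sum_j\lambda_j\xi_i^j$ by Carath\'eodory with $\xi^j\in\partial_*^*\Psi(w)$, and applying monotonicity to the pair $(w,\bar p)$ componentwise, I get $\langle P_{\bar p,w}\xi_i^j-\xi_i^*;\exp_{\bar p}^{-1}w\rangle\geq0$; since $w=z(\mu)$ gives $\exp_{\bar p}^{-1}w=\mu\exp_{\bar p}^{-1}p$ with $\mu>0$, and since $P_{\bar p,w}$ is an isometry with inverse $P_{w,\bar p}$, this rearranges to $\langle\xi_i^j;P_{w,\bar p}\exp_{\bar p}^{-1}p\rangle\geq\langle\xi_i^*;\exp_{\bar p}^{-1}p\rangle$. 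Taking the convex combination over $j$ (and then a limit in the sequential case) yields $\Psi_i(p)-\Psi_i(\bar p)=\langle\eta_i;P_{w,\bar p}\exp_{\bar p}^{-1}p\rangle\geq\langle\xi_i^*;\exp_{\bar p}^{-1}p\rangle$ for each $i$, hence $\Psi(p)-\Psi(\bar p)\geqq\langle\xi^*;\exp_{\bar p}^{-1}p\rangle_m$, i.e. $\Psi$ is $\partial_*^*$-convex.

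I expect the sufficiency direction to be the main obstacle: it requires dovetailing the Mean Value Theorem with a Carath\'eodory decomposition of the convex-hull multiplier $\eta_i$, together with the sub-geodesic and parallel-transport relations $\exp_{\bar p}^{-1}w=\mu\exp_{\bar p}^{-1}p$ and $\langle P_{\bar p,w}u;v\rangle=\langle u;P_{w,\bar p}v\rangle$ of the type established in~(\ref{e3})--(\ref{e5}), and with the observation that a one-sided inequality passes to the limit in the sequential conclusion of the Mean Value Theorem. Necessity, by contrast, is immediate once the identity $\exp_q^{-1}p=-P_{q,p}\exp_p^{-1}q$ is recorded.
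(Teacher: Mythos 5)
Your proposal is correct and follows essentially the same route as the paper: necessity by adding the two $\partial_*^*$-convexity inequalities and invoking $\exp_q^{-1}p=-P_{q,p}\exp_p^{-1}q$, and sufficiency by combining the Mean Value Theorem along the geodesic with monotonicity at the endpoint. The only differences are cosmetic improvements in rigor on your side --- you apply the Mean Value Theorem once to the whole segment instead of splitting it at an intermediate parameter $\hat\mu$ as the paper does, and you explicitly justify (via Carath\'eodory and passage to the limit) why monotonicity, stated for elements of $\partial_*^*\Psi$, extends to the convex-hull elements produced by the Mean Value Theorem, a point the paper glosses over.
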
        
\begin{proof}
	Suppose that $ \Psi $ is $ \partial_{*}^{*} $-convex $ \mathcal{K} $. Then, for any $ p,q\in \mathcal{K},~\xi\in\partial_{*}^{*}\Psi(p)~\mbox{and}~\zeta\in\partial_{*}^{*}\Psi(q)$, one has
	\begin{eqnarray}\label{e6}
		\Psi(p)-\Psi(q)\geqq \left\langle\zeta;\exp_{q}^{-1}p\right\rangle_{m};
	\end{eqnarray}
	and
	\begin{eqnarray}\label{e7}
		\Psi(q)-\Psi(p)\geqq \left\langle\xi;\exp_{p}^{-1}q\right\rangle_{m};
	\end{eqnarray}
	adding (\ref{e6}) and (\ref{e7}), we have
	\begin{eqnarray*}
		\left\langle P_{q,p}\xi-\zeta; \exp_{q}^{-1}p\right\rangle_{m}\geqq0.
	\end{eqnarray*}
	Hence, $ \partial_{*}^{*}\Psi $ is monotone on $ \mathcal{K} $.\\
	
	\noindent
	For converse, let $ \partial_{*}^{*}\Psi $ be a monotone on $ \mathcal{K} $ and $ z(\mu):=\exp_{q}(\mu\exp_{q}^{-1}p)~\forall~\mu\in[0,1]$.
	By the geodesic convexity of $ \mathcal{K} $, $ z(\mu)\in \mathcal{K}$,  $\forall~ \mu\in[0,1] $. By Theorem\ref{[Mean Value Theorem]}, for $ i\in M $, and $ \hat{\mu}\in(0,1) $, $\exists$ $ \tilde{\mu}_{i}\in(0,\hat{\mu}) $ and $ \bar{\mu}_{i}\in(\hat{\mu},1) $ such that for $ \tilde{\xi}_{i}\in co\partial_{*}^{*}\Psi_{i}(z(\tilde{\mu}_{i})) $ and $ \bar{\xi}_{i}\in co\partial_{*}^{*}\Psi_{i}(z(\bar{\mu}_{i})) $, one has
	\begin{equation*}
		\Psi_{i}(z(\hat{\mu}))-\Psi_{i}(z(0))=\left\langle \tilde{\xi}_{i};\exp_{z(0)}^{-1}z(\hat{\mu})\right\rangle=\hat{\mu}\left\langle\tilde{\xi}_{i};\exp_{y}^{-1}p\right\rangle;
	\end{equation*}
	and
	\begin{equation*}
		\Psi_{i}(z(1))-\Psi_{i}(z(\hat{\mu}))=\left\langle \bar{\xi}_{i}; \exp_{z(\hat{\mu})}^{-1}z(1)\right\rangle=(1-\hat{\mu})\left\langle \bar{\xi};exp_{q}^{-1}p\right\rangle.
	\end{equation*}
	By the monotonicity of $\partial_{*}^{*}\Psi$ on $ \mathcal{K} $, for any $ i\in M $ and $ \zeta_{i}\in co\partial_{*}^{*}\Psi_{i}(q) $, it follows that,
	\begin{equation*}
		\Psi_{i}(z(\hat{\mu}))-\Psi_{i}(z(0))\geq\hat{\mu}\left\langle \zeta_{i};\exp_{q}^{-1}p\right\rangle,
	\end{equation*}
	\begin{equation*}
		\Psi_{i}(z(1))-\Psi_{i}(z(\hat{\mu}))\geq(1-\hat{\mu})\left\langle\zeta_{i}:\exp_{q}^{-1}p\right\rangle.
	\end{equation*}
	On adding above inequalities, we get
	\begin{equation*}
		\Psi_{i}(p)-\Psi_{i}(q)\geq\left\langle\zeta_{i}; \exp_{q}^{-1}p\right\rangle.
	\end{equation*}
	$\implies$ $\Psi$ is $\partial_{*}^{*}$-convex on $ \mathcal{K} $.
\end{proof}

\begin{corollary}
	Suppose $ \mathcal{K}(\neq\phi)\subseteq\mathbb{H} $ is a GC set and let $ \Psi:\mathcal{K}\longrightarrow \mathbb{R}^{m} $ be a vector valued function such that $ \Psi_{i}:\mathcal{K}\longrightarrow \mathbb{R} $ are locally Lipschitz functions on $ \mathcal{K} $ and admits a bounded convexificators  $\partial_{*}^{*}\Psi(p)$   for any $ p\in \mathcal{K} $ and $ i\in M=\{1,2,...m\} $. Then, $\Psi$ is strictly  $\partial_{*}^{*}$-convex on $ \mathcal{K} $ iff  $\partial_{*}^{*}\Psi$ is strictly monotone on $ \mathcal{K} $.
\end{corollary}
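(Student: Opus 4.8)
The plan is to run the proof of Theorem~\ref{theorem3.4} with every vector inequality replaced by its strict analogue, checking at each step that strictness is genuinely produced and then preserved; the skeleton is unchanged, so I will only flag the places where something new must be verified.

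For the forward implication I would assume $\Psi$ is strictly $\partial_{*}^{*}$-convex, fix $p,q\in\mathcal{K}$ with $p\neq q$, and take $\xi\in\partial_{*}^{*}\Psi(p)$, $\zeta\in\partial_{*}^{*}\Psi(q)$. The definition gives $\Psi(p)-\Psi(q)>\langle\zeta;\exp_{q}^{-1}p\rangle_{m}$ and $\Psi(q)-\Psi(p)>\langle\xi;\exp_{p}^{-1}q\rangle_{m}$, both strict because $\exp_{q}^{-1}p\neq 0$. Using $\exp_{p}^{-1}q=-P_{p,q}\exp_{q}^{-1}p$ together with the fact that parallel transport is a linear isometry with $P_{q,p}=(P_{p,q})^{-1}$, the second inequality becomes $\langle P_{q,p}\xi;\exp_{q}^{-1}p\rangle_{m}>\Psi(p)-\Psi(q)$; adding it to the first yields $\langle P_{q,p}\xi-\zeta;\exp_{q}^{-1}p\rangle_{m}>0$, i.e.\ $\partial_{*}^{*}\Psi$ is strictly monotone.

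For the converse I would assume $\partial_{*}^{*}\Psi$ is strictly monotone, fix $p\neq q$ in $\mathcal{K}$ and $i\in M$, put $z(\mu)=\exp_{q}(\mu\exp_{q}^{-1}p)\in\mathcal{K}$, and pick $\hat\mu\in(0,1)$. Applying Theorem~\ref{[Mean Value Theorem]} on the sub-geodesics $[z(0),z(\hat\mu)]$ and $[z(\hat\mu),z(1)]$ produces $\tilde\mu_i\in(0,\hat\mu)$, $\bar\mu_i\in(\hat\mu,1)$ and $\tilde\xi_i\in\mathrm{co}\,\partial_{*}^{*}\Psi_i(z(\tilde\mu_i))$, $\bar\xi_i\in\mathrm{co}\,\partial_{*}^{*}\Psi_i(z(\bar\mu_i))$ with
\begin{equation*}
\Psi_i(z(\hat\mu))-\Psi_i(q)=\hat\mu\left\langle P_{q,z(\tilde\mu_i)}\tilde\xi_i;\exp_{q}^{-1}p\right\rangle,\qquad
\Psi_i(p)-\Psi_i(z(\hat\mu))=(1-\hat\mu)\left\langle P_{q,z(\bar\mu_i)}\bar\xi_i;\exp_{q}^{-1}p\right\rangle .
\end{equation*}
(The bounded-convexificator hypothesis makes $\mathrm{co}\,\partial_{*}^{*}\Psi_i$ compact, which rules out the ``limit of a sequence'' alternative in Theorem~\ref{[Mean Value Theorem]} and lets us use the equality form here.) Next I would apply strict monotonicity between $z(\tilde\mu_i)$ and $q$, and between $z(\bar\mu_i)$ and $q$ — legitimate since $\tilde\mu_i,\bar\mu_i>0$ and $p\neq q$ force these points to differ from $q$ — use $\exp_{q}^{-1}z(\tilde\mu_i)=\tilde\mu_i\exp_{q}^{-1}p$, and divide by the positive scalars $\tilde\mu_i,\bar\mu_i$, obtaining for every $\zeta_i\in\partial_{*}^{*}\Psi_i(q)$
\begin{equation*}
\Psi_i(z(\hat\mu))-\Psi_i(q)>\hat\mu\left\langle\zeta_i;\exp_{q}^{-1}p\right\rangle,\qquad
\Psi_i(p)-\Psi_i(z(\hat\mu))>(1-\hat\mu)\left\langle\zeta_i;\exp_{q}^{-1}p\right\rangle .
\end{equation*}
Adding these and letting $i$ range over $M$ gives $\Psi(p)-\Psi(q)>\langle\zeta;\exp_{q}^{-1}p\rangle_{m}$ componentwise, which is strict $\partial_{*}^{*}$-convexity.

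I do not expect a serious obstacle, since the argument is a line-by-line strengthening of Theorem~\ref{theorem3.4}. The one genuinely new point is that strict monotonicity must be invoked at the mean-value points $z(\tilde\mu_i)$, where $\tilde\xi_i$ is only known to lie in the convex hull $\mathrm{co}\,\partial_{*}^{*}\Psi_i$; this is harmless because, for fixed $\zeta_i$ and $u=\exp_{q}^{-1}p$, the map $\eta\mapsto\langle P_{q,z(\tilde\mu_i)}\eta-\zeta_i;u\rangle$ is affine in $\eta$, so strict positivity on $\partial_{*}^{*}\Psi_i(z(\tilde\mu_i))$ propagates to its convex hull. The remaining care is purely bookkeeping: keeping the parallel transports straight and the signs of $\tilde\mu_i,\bar\mu_i$ positive so that dividing through never flips a strict inequality.
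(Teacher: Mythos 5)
Your proof is correct and follows exactly the route the paper intends: the corollary is stated without proof as the strict analogue of Theorem~\ref{theorem3.4}, and your argument is that theorem's proof with strictness tracked through each step. The two points you flag explicitly --- that compactness of $\mathrm{co}\,\partial_{*}^{*}\Psi_i$ justifies the equality form of the mean value theorem, and that strict positivity of the affine map $\eta\mapsto\langle P_{q,z(\tilde\mu_i)}\eta-\zeta_i;u\rangle$ passes from $\partial_{*}^{*}\Psi_i(z(\tilde\mu_i))$ to its convex hull --- are precisely the places where the adaptation could have silently failed, so making them explicit is a genuine improvement on what the paper leaves implicit.
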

\begin{proposition}\label{prop3.6}
	Suppose $ \mathcal{K}(\neq\phi)\subseteq\mathbb{H} $ is GC set and let $ \Psi:\mathcal{K}\longrightarrow\mathbb{R}^{m} $ be function such that $ \Psi_{i}:\mathcal{K}\longrightarrow\mathbb{R} $ are locally Lipschitz functions on $ \mathcal{K} $ and admits a bounded convexificator $ \partial_{*}^{*}\Psi(p) $ for any $ p\in \mathcal{K} $ and $\forall~ i\in M $. If $\Psi$ is  $\partial_{*}^{*}$-convex on $ \mathcal{K} $. Then for any $ p,q\in \mathcal{K}~\mbox{and}~\mu\in[0,1] $, one has
	\begin{equation*}
		\Psi(\exp_{q}\mu\exp_{q}^{-1}p)\leqq \Psi(q)+\mu(\Psi(p)-\Psi(q)).
	\end{equation*}
\end{proposition}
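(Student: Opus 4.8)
The plan is to obtain the asserted convexity-type estimate by applying the definition of $\partial_{*}^{*}$-convexity at the intermediate point of the geodesic and then taking a convex combination. Fix $p,q\in\mathcal K$. The cases $\mu=0$ and $\mu=1$ are trivial, so assume $\mu\in(0,1)$ and put $z(\mu):=\exp_q(\mu\exp_q^{-1}p)$. Since $\mathcal K$ is a GC set, $z(\mu)\in\mathcal K$, so by hypothesis $\Psi$ admits a bounded convexificator $\partial_{*}^{*}\Psi(z(\mu))$ at this point; choose any $\xi^{*}\in\partial_{*}^{*}\Psi(z(\mu))$.

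Because $\Psi$ is $\partial_{*}^{*}$-convex at $z(\mu)$ over $\mathcal K$, the defining inequality applied with the test point $p$ and then with the test point $q$ gives
\begin{equation*}
	\Psi(p)-\Psi(z(\mu))\geqq\left\langle\xi^{*};\exp_{z(\mu)}^{-1}p\right\rangle_m,\qquad \Psi(q)-\Psi(z(\mu))\geqq\left\langle\xi^{*};\exp_{z(\mu)}^{-1}q\right\rangle_m.
\end{equation*}
Next I would transcribe the geodesic-splitting identities: since $z(\mu)$ is the point at parameter $\mu$ on the minimal geodesic from $q$ to $p$, the computation that produced (\ref{e3}) and (\ref{e4}) — with the roles of $p$ and $q$ interchanged and $s$ replaced by $\mu$ — yields
\begin{equation*}
	\exp_{z(\mu)}^{-1}q=-\mu\,P_{z(\mu),q}\exp_q^{-1}p,\qquad \exp_{z(\mu)}^{-1}p=(1-\mu)\,P_{z(\mu),q}\exp_q^{-1}p,
\end{equation*}
and hence the linear relation $\mu\exp_{z(\mu)}^{-1}p+(1-\mu)\exp_{z(\mu)}^{-1}q=0$ in $T_{z(\mu)}\mathbb H$.

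Finally, multiplying the first inequality above by $\mu\geq 0$ and the second by $1-\mu\geq 0$, adding them componentwise, and using bilinearity of $\left\langle\cdot;\cdot\right\rangle$ together with the linear relation, I obtain
\begin{equation*}
	\mu\Psi(p)+(1-\mu)\Psi(q)-\Psi(z(\mu))\geqq\left\langle\xi^{*};\mu\exp_{z(\mu)}^{-1}p+(1-\mu)\exp_{z(\mu)}^{-1}q\right\rangle_m=0,
\end{equation*}
which rearranges to $\Psi(\exp_q\mu\exp_q^{-1}p)\leqq\Psi(q)+\mu(\Psi(p)-\Psi(q))$, as claimed. The only step needing genuine care is the middle one, namely verifying the linear relation $\mu\exp_{z(\mu)}^{-1}p+(1-\mu)\exp_{z(\mu)}^{-1}q=0$ by correctly re-labelling the splitting identities (\ref{e1})--(\ref{e5}) to the present configuration; once that is in hand the proof is a one-line convex combination, and it uses $\partial_{*}^{*}$-convexity only at the single point $z(\mu)$.
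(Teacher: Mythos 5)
Your proposal is correct and follows essentially the same route as the paper's own proof: apply $\partial_{*}^{*}$-convexity at the intermediate point $z(\mu)$ with test points $p$ and $q$, rewrite $\exp_{z(\mu)}^{-1}p$ and $\exp_{z(\mu)}^{-1}q$ via the geodesic-splitting identities (the paper's (\ref{e8})--(\ref{e9})), and take the $\mu$, $1-\mu$ convex combination. If anything, you are more careful than the paper, which suppresses the parallel transport $P_{z(\mu),q}$ in its displayed equalities.
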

\begin{proof}
	Let $ p,q\in \mathcal{K} $ and $ z(\mu):=\exp_{q}\mu\exp_{q}^{-1}p $ for any $ \mu\in [0,1] $. By the convexity of $ \mathcal{K} $, $ z\in \mathcal{K} $. By $\partial_{*}^{*}$-convexity of $ \Psi $ on $ \mathcal{K} $, for any $\zeta\in  \partial_{*}^{*}\Psi(z)$ such that
	\begin{equation}\label{e8}
		\Psi(p)-\Psi(z)\geqq \left\langle\zeta;\exp_{z}^{-1}p\right\rangle_{m}=(1-\mu)\left\langle\zeta;\exp_{q}^{-1}p\right\rangle_{m};
	\end{equation}
	and
	\begin{equation}\label{e9}
		\Psi(q)-\Psi(z)\geqq \left\langle\zeta;\exp_{z}^{-1}q\right\rangle_{m}=-\mu\left\langle\zeta;\exp_{q}^{-1}p\right\rangle_{m};
	\end{equation}
	from (\ref{e8}) and (\ref{e9}), we have
	\begin{equation*}
		\Psi(z)\leqq \mu\Psi(p)+(1-\mu)\Psi(q)
	\end{equation*}
	that is,
	\begin{equation*}
		\Psi(\exp_{q}\mu\exp_{q}^{-1}p)\leqq \Psi(q)+\mu(\Psi(p)-\Psi(q)).
	\end{equation*}
\end{proof}
\begin{proposition}
	Suppose $ \mathcal{K}(\neq\phi)\subseteq\mathbb{H} $ is GC set and let $ \Psi:\mathcal{K}\longrightarrow\mathbb{R}^{m} $ be a function such that $ \Psi_{i}:\mathcal{K}\longrightarrow\mathbb{R} $ are locally Lipschitz functions on $ \mathcal{K} $ and for any $ p\in \mathcal{K} $ admits a bounded convexificator $ \partial_{*}^{*}\Psi(p) $ $\forall~ i\in M $. If $\Psi$ is strictly  $\partial_{*}^{*}$-convex on $ \mathcal{K} $. Then for any $ p,q\in \mathcal{K}~\mbox{and}~\mu\in[0,1] $, one has
	\begin{equation*}
		\Psi(\exp_{q}\mu\exp_{q}^{-1}p)< \Psi(q)+\mu(\Psi(p)-\Psi(q)).
	\end{equation*}
\end{proposition}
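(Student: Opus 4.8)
The plan is to follow the proof of Proposition~\ref{prop3.6} line by line, the only change being that strict $\partial_{*}^{*}$-convexity now feeds strict (componentwise) inequalities into the argument in place of the weak ones. Fix $p,q\in\mathcal{K}$ and, for $\mu\in(0,1)$, put $z:=z(\mu)=\exp_{q}\mu\exp_{q}^{-1}p$; the endpoint values $\mu\in\{0,1\}$ correspond to $z(0)=q$ and $z(1)=p$, where the displayed relation is an equality and so should be read over $(0,1)$. By geodesic convexity of $\mathcal{K}$ we have $z\in\mathcal{K}$, hence $\partial_{*}^{*}\Psi(z)$ is defined and we may choose $\zeta\in\partial_{*}^{*}\Psi(z)$.

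Next I would invoke the definition of strict $\partial_{*}^{*}$-convexity at the base point $z$, once toward $p$ and once toward $q$, to get
\begin{equation*}
	\Psi(p)-\Psi(z)>\left\langle\zeta;\exp_{z}^{-1}p\right\rangle_{m},\qquad \Psi(q)-\Psi(z)>\left\langle\zeta;\exp_{z}^{-1}q\right\rangle_{m}.
\end{equation*}
Since $z$ lies on the minimal geodesic joining $q$ to $p$, the identities (\ref{e3})--(\ref{e4}) (with the roles of $p$ and $q$ interchanged) yield $\exp_{z}^{-1}p=(1-\mu)P_{z,q}\exp_{q}^{-1}p$ and $\exp_{z}^{-1}q=-\mu P_{z,q}\exp_{q}^{-1}p$, exactly as in (\ref{e8})--(\ref{e9}); so the two inequalities become
\begin{equation*}
	\Psi(p)-\Psi(z)>(1-\mu)\left\langle\zeta;\exp_{q}^{-1}p\right\rangle_{m},\qquad \Psi(q)-\Psi(z)>-\mu\left\langle\zeta;\exp_{q}^{-1}p\right\rangle_{m}.
\end{equation*}
Multiplying the first by $\mu>0$, the second by $(1-\mu)>0$, and adding, the $\left\langle\zeta;\exp_{q}^{-1}p\right\rangle_{m}$ terms cancel and we are left with $\mu\Psi(p)+(1-\mu)\Psi(q)-\Psi(z)>0$, that is, $\Psi(\exp_{q}\mu\exp_{q}^{-1}p)<\Psi(q)+\mu(\Psi(p)-\Psi(q))$, which is the claim.

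There is no genuinely hard step here; the two places that want a little care are (i) that scaling by the positive numbers $\mu$ and $1-\mu$ and then adding preserves the componentwise strict order "$<$" on $\mathbb{R}^{m}$ — which is precisely why one needs $\mu\in(0,1)$ rather than $[0,1]$ to keep the conclusion strict — and (ii) the parallel-transport bookkeeping when moving between $T_{z}\mathbb{H}$ and $T_{q}\mathbb{H}$, which is already packaged in (\ref{e3})--(\ref{e5}) and used in Proposition~\ref{prop3.6}. Beyond that, the proof is a verbatim transcription of the proof of Proposition~\ref{prop3.6} with every "$\geqq$" replaced by "$>$".
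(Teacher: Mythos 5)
Your proof is correct and is exactly what the paper intends: the paper's own ``proof'' consists of the single remark that the argument is analogous to Proposition~\ref{prop3.6}, and you have carried out precisely that transcription, replacing each ``$\geqq$'' with ``$>$'' and correctly noting that the strict conclusion can only be asserted for $\mu\in(0,1)$ (and $p\neq q$), since at the endpoints the two sides coincide.
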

\begin{proof}
	The proof is analogous to the proposition \ref{prop3.6}.
\end{proof}
\section{Vector variational inequality problems using convexificators}
In this section, we consider the VVIP in terms of the convexificators on the Hadamard manifold and construct an example in support of definition of convexificators and existence of Stampacchia $\partial_{*}^{*}$-VVI. Furthermore, we establish the relations among Stampacchia $\partial_{*}^{*}$-VVI, the Minty type $\partial_{*}^{*}$-VVI and VOP.\\
Suppose $\mathcal{K}(\neq\phi)\subseteq \mathbb{H}$ is a set and let $\Psi:\mathcal{K}\longrightarrow \mathbb{R}^{m}$ be a vector valued function. Then we define:\\

\noindent
{\bf Stampacchia $\partial_{*}^{*}$-VVI :} Find $ \bar{p}\in \mathcal{K} $, such that for any $ q\in \mathcal{K}$, $\exists~ \xi\in\partial_{*}^{*}\Psi(\bar{p}) $, one has
\begin{equation*}
	\left\langle\xi;\exp_{\bar{p}}^{-1}q\right\rangle_{m}\notin-\mathbb{R}_{+}^{m}\setminus\{0\};
\end{equation*}
or
\begin{equation*}
	\left(\left\langle\xi_{1}; \exp_{\bar{p}}^{-1}q\right\rangle,\left\langle\xi_{2}; \exp_{\bar{p}}^{-1}q\right\rangle,...,\left\langle\xi_{m}; \exp_{\bar{p}}^{-1}q\right\rangle\right)\notin-\mathbb{R}_{+}^{m}\setminus\{0\}.
\end{equation*}
{\bf Minty $\partial_{*}^{*}$-VVI :} Find $\bar{p}\in \mathcal{K}$ such that for any $ q\in \mathcal{K} $ and $\xi\in \partial_{*}^{*}\Psi(q)$, one has
\begin{equation*}
	\left\langle\xi;\exp_{q}^{-1}\bar{p}\right\rangle_{m}\notin\mathbb{R}_{+}^{m}\setminus\{0\};
\end{equation*}
or
\begin{equation*}
	\left(\left\langle\xi_{1}; \exp_{q}^{-1}\bar{p}\right\rangle,\left\langle\xi_{2}; \exp_{q}^{-1}\bar{p}\right\rangle,...,\left\langle\xi_{m}; \exp_{q}^{-1}\bar{p}\right\rangle\right)\notin\mathbb{R}_{+}^{m}\setminus\{0\}.
\end{equation*}
In the following example, we show the existence of convexificators for Hadamard manifolds and Stampacchia $\partial_{*}^{*}$-VVI.
\begin{example}
	Let $\mathbb{H}=\{(p_{1},p_{2})\in \mathbb{R}^{2}: p_{2}>0\}$ be a Poincaré plane endowed with the Riemannian metric $g_{i,j}(p_{1},p_{2})=(\frac{1}{p_{2}^{2}})\delta_{i,j}$ for $i=1,2$, where $\delta_{i,j}$ denotes the Kronecker delta. Then, $\mathbb{H}$ is a Hadamard manifold with negative section curvature. The geodesic passing at moment $t=0$, through the point $p=(p_{1},p_{2})$, tangent to the vector $w=(w_{1},w_{2})\in T_{p}\mathbb{H}$ (see, [\citealp{udriste2013convex},pp 20]) is
	\begin{eqnarray*}
		\gamma_{w}(t)=(p_{1},p_{2}e^{t}),~for~w_{1}=0,~w_{2}=p_{2}~\mbox{ and for all}~t\in [0,\infty).
	\end{eqnarray*}
	Consider the function $\Psi:\mathbb{H}\longrightarrow\mathbb{R}^{2}$ by 
	\begin{eqnarray*}
		\Psi(p)=(\Psi_{1}(p),\Psi_{2}(p))=(|p_{1}|+p_{2}\ln p_{2}, |p_{1}|+p_{2}^{2})
	\end{eqnarray*}
	Since $\exp_{p}(tw)=\gamma_{tw}(1)=\gamma_{w}(t)=(p_{1},p_{2}e^{t})$ with the velocity vector $\gamma'_{w}(0)=(0,p_{2})\in T_{p}\mathbb{H}$.
	\begin{eqnarray*}
		\Psi_{1}(\exp_{p}tw)-\Psi_{1}(p)=\Psi_{1}(p_{1},p_{2}e^{t})-\Psi_{1}(p_{1},p_{2})
	\end{eqnarray*}
	\begin{eqnarray*}
		~~~~~~~~~~~~~~~~~~~~~~	=p_{2}\ln p_{2}(e^{t}-1)+p_{2}te^{t}
	\end{eqnarray*}
	\begin{eqnarray*}
		\frac{\Psi_{1}(\exp_{p}tw)-\Psi_{1}(p)}{t}=p_{2}\ln p_{2}(\frac{e^{t}-1}{t})+p_{2}e^{t}
	\end{eqnarray*}
	taking $\limsup$ and $\liminf$ as $t\rightarrow 0$, we have
	\begin{eqnarray*}
		\Psi_{1}^{+}(p;w)=\limsup_{t\rightarrow 0^{+}}\frac{\Psi_{1}(\exp_{p}tw)-\Psi_{1}(p)}{t}\geq p_{2}\ln p_{2}+p_{2}
	\end{eqnarray*}
	\begin{eqnarray*}
		\Psi_{1}^{-}(p;w)=\liminf_{t\rightarrow 0^{+}}\frac{\Psi_{1}(\exp_{p}tw)-\Psi_{1}(p)}{t}\leq p_{2}\ln p_{2}+p_{2}
	\end{eqnarray*}
	Hence, the convexificators of $\Psi_{1}$ at $p$ is given as follows:
	\begin{eqnarray*}
		\partial_{*}^{*}\Psi_{1}(p)=\left\{ \begin{array}{lll}
			\{(1,1+\ln p_2)\}, \quad &p_1>0,
			\\\{(1,1+\ln p_2), (-1,1+\ln p_2)\}, \quad &p_1=0,
			\\\{(-1,1+\ln p_2)\}, \quad &p_1<0.
		\end{array}
		\right.
	\end{eqnarray*}
	Now, for function $\Psi_{2}$, we have
	\begin{eqnarray*}
		\Psi_{2}(\exp_{p}tw)-\Psi_{2}(p)=\Psi_{2}(p_1, p_2e^t)-\Psi_{2}(p_1,p_2)
	\end{eqnarray*}
	\begin{eqnarray*}
		\frac{	\Psi_{2}(\exp_{p}tw)-\Psi_{2}(p)}{t}=p_{2}^{2}\frac{(e^{2t}-1)}{t}.
	\end{eqnarray*}
	Taking $\limsup$ and $\liminf$ as $t\rightarrow 0$, we have
	\begin{eqnarray*}
		\Psi_{2}^{+}(p;w)\geq 2p_{2}^{2}=p_{2}\cdot p_{2}
	\end{eqnarray*}
	\begin{eqnarray*}
		\Psi_{2}^{-}(p;w)\leq 2p_{2}^{2}=p_{2}\cdot p_{2}
	\end{eqnarray*}
	Hence, the convexificators of $\Psi_{2}$ at $p$ is given as follows:
	\begin{eqnarray*}
		\partial_{*}^{*}\Psi_{2}(p)=\left\{ \begin{array}{lll}
			\{(1,2p_2)\}, \quad &p_1>0,
			\\\{(1,2p_2), (-1,2p_2)\}, \quad &p_1=0,
			\\\{(-1,2p_2)\}, \quad &p_1<0.
		\end{array}
		\right.
	\end{eqnarray*}
	For any $q\in \mathbb{H}$ and $p=(0,1)$, $\xi_{11}:=(1,1),~\xi_{12}:=(-1,1)\in \partial_{*}^{*}\Psi_{1}(0,1)$ and $\xi_{21}:=(1,2),~\xi_{22}:=(-1,2)\in \partial_{*}^{*}\Psi_{2}(0,1)$, we have 
	\begin{eqnarray*}
		\left\langle \xi_{11};\exp_{p}^{-1}q\right\rangle=\frac{q_2}{e};~\left\langle\xi_{12};\exp_{p}\right\rangle=\frac{q_2}{e};
	\end{eqnarray*}
	\begin{eqnarray*}
		\left\langle \xi_{21}; \exp_{p}^{-1}q\right\rangle=\frac{2q_{2}}{e};~ \left\langle \xi_{22};\exp_{p}^{-1}q\right\rangle=\frac{2q_{2}}{e};
	\end{eqnarray*}
	which implies that, for any $q\in \mathbb{H}$, there exists $\xi\in \partial_{*}^{*}\Psi(p)$ such that 
	\begin{eqnarray*}
		\left\langle \xi; \exp_{p}^{-1}q\right\rangle_{2}\in \mathbb{R}_{+}^{2}.
	\end{eqnarray*}
	Therefore, $p=(0,1)$ is a solution of the Stampacchia $\partial_{*}^{*}$-VVI.
\end{example}

\noindent
The following proposition gives the relations between Stampacchia  $\partial_{*}^{*}$-VVI and Minty $\partial_{*}^{*}$-VVI.
\begin{proposition}
	Suppose $ \mathcal{K}(\neq\phi)\subseteq\mathbb{H} $ is a GC set and let $ \Psi:\mathcal{K}\longrightarrow\mathbb{R}^{m} $ be a function such that $ \Psi_{i}:\mathcal{K}\longrightarrow\mathbb{R} $ are locally Lipschitz functions on $ \mathcal{K} $ and for any $ p\in \mathcal{K} $ admits a bounded convexificator $ \partial_{*}^{*}\Psi_{i}(p) $ $\forall$ $ i\in M=\{1,2,...,m\} $. Also, suppose that $\Psi$ is $\partial_{*}^{*}$-convex on $ \mathcal{K} $. If $ \bar{p}\in \mathcal{K} $ is a solution of the Stampacchia $\partial_{*}^{*}$-VVIP, then $\bar{p}$ is also a solution of the Minty  $\partial_{*}^{*}$-VVIP.
\end{proposition}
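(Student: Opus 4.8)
The plan is to argue by contradiction, using the $\partial_{*}^{*}$-convexity of $\Psi$ at two base points: the prospective solution $\bar{p}$ and an arbitrary competitor $q$. Suppose $\bar{p}$ solves the Stampacchia $\partial_{*}^{*}$-VVIP but fails to solve the Minty $\partial_{*}^{*}$-VVIP. Then there exist $q\in\mathcal{K}$ and $\xi\in\partial_{*}^{*}\Psi(q)$ with
\begin{equation*}
	\left\langle\xi;\exp_{q}^{-1}\bar{p}\right\rangle_{m}\in\mathbb{R}_{+}^{m}\setminus\{0\}.
\end{equation*}
I would fix this $q$ for the rest of the argument.

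First I would invoke $\partial_{*}^{*}$-convexity of $\Psi$ at the base point $q$, applied to the competing point $\bar{p}$ and the convexificator element $\xi$: this yields $\Psi(\bar{p})-\Psi(q)\geqq\left\langle\xi;\exp_{q}^{-1}\bar{p}\right\rangle_{m}$. Since the right-hand side lies in $\mathbb{R}_{+}^{m}\setminus\{0\}$, every component of $\Psi(\bar{p})-\Psi(q)$ is nonnegative and at least one is strictly positive, so that
\begin{equation*}
	\Psi(q)-\Psi(\bar{p})\in-\mathbb{R}_{+}^{m}\setminus\{0\}.
\end{equation*}

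Next I would use $\partial_{*}^{*}$-convexity of $\Psi$ at the base point $\bar{p}$, applied to the competing point $q$: for \emph{every} $\eta\in\partial_{*}^{*}\Psi(\bar{p})$ one has $\Psi(q)-\Psi(\bar{p})\geqq\left\langle\eta;\exp_{\bar{p}}^{-1}q\right\rangle_{m}$, hence $\left\langle\eta;\exp_{\bar{p}}^{-1}q\right\rangle_{m}\leqq\Psi(q)-\Psi(\bar{p})\in-\mathbb{R}_{+}^{m}\setminus\{0\}$. Reading this componentwise forces $\left\langle\eta;\exp_{\bar{p}}^{-1}q\right\rangle_{m}\in-\mathbb{R}_{+}^{m}\setminus\{0\}$ for \emph{all} $\eta\in\partial_{*}^{*}\Psi(\bar{p})$. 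But $\bar{p}$ solves the Stampacchia $\partial_{*}^{*}$-VVIP, so for this very $q$ there must exist some $\eta\in\partial_{*}^{*}\Psi(\bar{p})$ with $\left\langle\eta;\exp_{\bar{p}}^{-1}q\right\rangle_{m}\notin-\mathbb{R}_{+}^{m}\setminus\{0\}$, a contradiction. Hence $\bar{p}$ solves the Minty $\partial_{*}^{*}$-VVIP.

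I do not expect a serious obstacle here; the only step needing care is the bookkeeping with the cone order relations $\geqq$, $\leqq$ and the pointed cone $\mathbb{R}_{+}^{m}\setminus\{0\}$, in particular the elementary fact that $u\geqq v$ with $v\in\mathbb{R}_{+}^{m}\setminus\{0\}$ implies $u\in\mathbb{R}_{+}^{m}\setminus\{0\}$ (and the symmetric statement for $\leqq$). An alternative would be to pass from $\partial_{*}^{*}$-convexity to monotonicity of $\partial_{*}^{*}\Psi$ via Theorem~\ref{theorem3.4} and then run the classical Stampacchia-to-Minty argument, but that route additionally requires the parallel-transport identity $\left\langle P_{q,\bar{p}}\eta;\exp_{q}^{-1}\bar{p}\right\rangle=-\left\langle\eta;\exp_{\bar{p}}^{-1}q\right\rangle$, so the direct convexity argument above seems preferable.
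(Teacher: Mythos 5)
Your proof is correct, and it takes a recognizably different route from the paper's. The paper argues forward: it invokes Theorem~\ref{theorem3.4} to convert $\partial_{*}^{*}$-convexity into monotonicity of $\partial_{*}^{*}\Psi$, and then passes from the Stampacchia condition at $\bar{p}$ to the Minty condition at $q$ via the monotonicity inequality $\left\langle P_{q,\bar{p}}\xi-\zeta;\exp_{q}^{-1}\bar{p}\right\rangle_{m}\geqq 0$ together with the parallel-transport identity $\left\langle P_{q,\bar{p}}\xi;\exp_{q}^{-1}\bar{p}\right\rangle=-\left\langle\xi;\exp_{\bar{p}}^{-1}q\right\rangle$ (a consequence of $\exp_{q}^{-1}\bar{p}=-P_{q,\bar{p}}\exp_{\bar{p}}^{-1}q$ and the isometry of parallel transport); the paper states this step very tersely and leaves both the transport identity and the cone bookkeeping implicit. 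You instead argue by contradiction and apply the $\partial_{*}^{*}$-convexity inequality twice, once at the base point $q$ and once at $\bar{p}$, routing everything through the intermediate quantity $\Psi(q)-\Psi(\bar{p})\in-\mathbb{R}_{+}^{m}\setminus\{0\}$. Since the forward half of Theorem~\ref{theorem3.4} is itself obtained by adding exactly these two convexity inequalities, the two arguments have the same mathematical content, but yours is self-contained: it avoids both the monotonicity theorem and the parallel-transport identity, and it makes the order-cone reasoning (that $u\geqq v$ with $v\in\mathbb{R}_{+}^{m}\setminus\{0\}$ forces $u\in\mathbb{R}_{+}^{m}\setminus\{0\}$) explicit, which is precisely the step the paper glosses over. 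Your use of convexity at the base point $q$ is legitimate, since the hypothesis that $\Psi$ is $\partial_{*}^{*}$-convex \emph{on} $\mathcal{K}$ means convexity at every point of $\mathcal{K}$, exactly as the paper itself uses it in the proof of Theorem~\ref{theorem3.4}.
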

\begin{proof}
	Let $\bar{p}$ be a solution of the Stampacchia $\partial_{*}^{*}$-VVIP. Then, for any $ q\in \mathcal{K} $, $\exists~\xi\in \partial_{*}^{*}\Psi(\bar{p}) $ such that 
	\begin{equation*}
		\left\langle\xi;\exp_{\bar{p}}^{-1}q\right\rangle_{m}\notin-\mathbb{R}_{+}^{m}\setminus\{0\}
	\end{equation*}
	Since $\Psi$ is $\partial_{*}^{*}$-convex on $ \mathcal{K} $, by Theorem \ref{theorem3.4}  $\partial_{*}^{*}\Psi$ is monotone over $ \mathcal{K} $, which implies that for any $ y\in \mathcal{K} $ and  $\zeta\in\partial_{*}^{*}\Psi(y) $, we have
	\begin{equation*}
		\left\langle\zeta;\exp_{q}^{-1}\bar{p}\right\rangle_{m}\notin\mathbb{R}_{+}^{m}\setminus\{0\}.
	\end{equation*}
	Hence, $\bar{p}$ is a solution of the Minty  $\partial_{*}^{*}$-VVIP.
\end{proof}

\noindent
\textbf{Vector optimization problem (VOP):}
Let $\mathcal{K}(\neq\phi)$ set of Hadamard manifold $\mathbb{H}$. And $\Psi:\mathbb{H}\longrightarrow \mathbb{R}^{m}$ be a vector valued function. Then, we consider vector optimization problem:
\begin{equation*}
	min\Psi(p)=(\Psi_{1}(p), \Psi_{2}(x),..., \Psi_{m}(p))
\end{equation*}
\begin{eqnarray*}
	such~that~ p\in \mathcal{K}
\end{eqnarray*}
Where $\Psi_{i}:\mathcal{K}\longrightarrow\mathbb{R}$ are real valued functions $\forall$ $i\in M=\{1,2,...,m\}$.\\

\begin{definition} A point $\bar{p}\in \mathcal{K}$ is said to be:
	\begin{enumerate}
		
		\item  an efficient solution of VOP if 
		\begin{equation*}
			\Psi(q)-\Psi(\bar{p})=(\Psi_{1}(q)-\Psi_{1}(\bar{p}),\Psi_{2}(q)-\Psi_{2}(\bar{p}),...,\Psi_{m}(q)-\Psi_{m}(\bar{p}))\notin -\mathbb{R}^{m}_{+}\setminus\{0\}~\forall~q\in \mathcal{K}.
		\end{equation*}
		\item   a weakly efficient solution of VOP if 
		\begin{equation*}
			\Psi(q)-\Psi(\bar{p})=(\Psi_{1}(q)-\Psi_{1}(\bar{p}),\Psi_{2}(q)-\Psi_{2}(\bar{p}),...,\Psi_{m}(q)-\Psi_{m}(\bar{p}))\notin -int\mathbb{R}^{m}_{+}~\forall~q\in \mathcal{K}.
		\end{equation*}
	\end{enumerate}
\end{definition}
\noindent
Remark: Efficient solution $\implies$ Weakly efficient solution.\\

\noindent
The following theorem discusses a relationship between the Stampacchia $\partial_{*}^{*}$-VVIP and the minimal point of the VOP. 
\begin{theorem}
	Suppose $ \mathcal{K}(\neq\phi)\subseteq\mathbb{H}$ is a GC set and let $\Psi:\mathcal{K}\longrightarrow\mathbb{R}^{m}$ be a function such that $\Psi_{i}:\mathcal{K}\longrightarrow\mathbb{R}$ are locally Lipschitz functions at $\bar{p}\in \mathcal{K}$ and admit bounded convexificators $\partial_{*}^{*}\Psi(\bar{p})$ $\forall~ i\in M=\{1,2,...,m\} $. Suppose that $\Psi$ is $ \partial_{*}^{*} $-convex at $\bar{p}$ over $ \mathcal{K} $. If $\bar{p}$ is a solution of the Stampacchia $\partial_{*}^{*}$-VVIP, then $\bar{p}$ is also a minimal point of the VOP.
\end{theorem}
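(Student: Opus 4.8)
The plan is to argue by contradiction, exactly as in the Euclidean prototype of Laha et al., transported to $\mathbb{H}$ via the exponential map. Suppose $\bar{p}$ solves the Stampacchia $\partial_{*}^{*}$-VVIP but is \emph{not} a minimal (efficient) point of the VOP. By the definition of efficiency there is then some $q\in\mathcal{K}$ with
\begin{equation*}
	\Psi(q)-\Psi(\bar{p})\in-\mathbb{R}_{+}^{m}\setminus\{0\}.
\end{equation*}

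Next I would feed this $q$ into the $\partial_{*}^{*}$-convexity hypothesis at $\bar{p}$: for \emph{every} $\xi^{*}\in\partial_{*}^{*}\Psi(\bar{p})$ we have $\Psi(q)-\Psi(\bar{p})\geqq\langle\xi^{*};\exp_{\bar{p}}^{-1}q\rangle_{m}$, that is $\langle\xi^{*};\exp_{\bar{p}}^{-1}q\rangle_{m}\leqq\Psi(q)-\Psi(\bar{p})$. Combining this componentwise with the displayed membership above, and using the elementary fact that a vector that is $\leqq$ a vector lying in $-\mathbb{R}_{+}^{m}\setminus\{0\}$ must itself lie in $-\mathbb{R}_{+}^{m}\setminus\{0\}$ (each coordinate stays $\le 0$, and the coordinate that was strictly negative remains strictly negative), I conclude
\begin{equation*}
	\langle\xi^{*};\exp_{\bar{p}}^{-1}q\rangle_{m}\in-\mathbb{R}_{+}^{m}\setminus\{0\}\qquad\text{for all }\xi^{*}\in\partial_{*}^{*}\Psi(\bar{p}).
\end{equation*}
But this is precisely the negation of the Stampacchia $\partial_{*}^{*}$-VVIP at the point $q$, which requires the existence of at least one $\xi\in\partial_{*}^{*}\Psi(\bar{p})$ with $\langle\xi;\exp_{\bar{p}}^{-1}q\rangle_{m}\notin-\mathbb{R}_{+}^{m}\setminus\{0\}$. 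This contradiction shows no such $q$ exists, so $\bar{p}$ is an efficient solution of the VOP.

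I do not anticipate a real obstacle here; the proof is short. The only points that need care are bookkeeping with the pointed cone $-\mathbb{R}_{+}^{m}\setminus\{0\}$ (propagating the "nonzero in one coordinate" part through the componentwise inequality) and the observation that $\partial_{*}^{*}$-convexity is quantified over \emph{all} $\xi^{*}\in\partial_{*}^{*}\Psi(\bar{p})$, which is exactly what is needed to contradict the \emph{existential} quantifier in the Stampacchia formulation. The same computation with $-\mathrm{int}\,\mathbb{R}_{+}^{m}$ replacing $-\mathbb{R}_{+}^{m}\setminus\{0\}$ yields, under the weak-Stampacchia version, that $\bar{p}$ is a weakly efficient solution.
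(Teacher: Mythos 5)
Your proof is correct and follows essentially the same contradiction argument as the paper: negate efficiency to obtain $q$ with $\Psi(q)-\Psi(\bar{p})\in-\mathbb{R}_{+}^{m}\setminus\{0\}$, push this through the $\partial_{*}^{*}$-convexity inequality, and contradict the Stampacchia condition. Your write-up is in fact slightly more careful than the paper's, since you make explicit both the pointed-cone bookkeeping and the clash between the universal quantifier in $\partial_{*}^{*}$-convexity and the existential quantifier in the Stampacchia formulation, which the paper leaves implicit.
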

\begin{proof}
	On contrary, suppose $\bar{p}$ is not a minimal point of the VOP. Then, $\exists~\tilde{p}$ such that 
	\begin{equation*}
		\Psi(\tilde{p})-\Psi(\bar{p})\in -\mathbb{R}_{+}^{m}\setminus\{0\}.
	\end{equation*}
	By $\partial_{*}^{*}$-convexity of $\Psi$ at $\bar{p}$ over $ \mathcal{K} $, we have
	\begin{equation*}
		\left\langle\xi;\exp_{\bar{p}}^{-1}\tilde{p}\right\rangle_{m}\in -\mathbb{R}_{+}^{m}\setminus\{0\}.
	\end{equation*}
	This contradicts to the fact that $\bar{p}$ is a solution of the Stampacchia $\partial_{*}^{*}$-VVIP.
\end{proof}

\noindent
In the following theorem , we show that the Minty $\partial_{*}^{*}$-VVIP is a necessary and sufficient condition for the VOP.
\begin{theorem}
	Suppose $ \mathcal{K}(\neq\phi)\subseteq\mathbb{H} $ is a GC set and $\Psi:\mathcal{K}\longrightarrow\mathbb{R}^{m}$ be a function such that $\Psi_{i}:\mathcal{K}\longrightarrow\mathbb{R}$ are locally Lipschitz functions on $ \mathcal{K} $ and for any $ p\in \mathcal{K} $ admits bounded convexificators $\partial_{*}^{*}\Psi(p)$, $\forall~ i\in M $. Suppose that $\Psi$ is $\partial_{*}^{*}$-convex on $ \mathcal{K} $. Then, $\bar{p}$ is a solution of the Minty $\partial_{*}^{*}$-VVIP iff $\bar{p}\in \mathcal{K}$ is a solution of the VOP.
\end{theorem}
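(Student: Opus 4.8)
Since the claim is a biconditional, the plan is to prove the two implications separately, using only the following ingredients from the development so far: Proposition~\ref{prop3.6} (the geodesic-convexity-type estimate for a $\partial_{*}^{*}$-convex $\Psi$), the lower-convexificator inequality $\Psi_i^{+}(p;v)\geq\inf_{\xi\in\partial_{*}^{*}\Psi_i(p)}\langle\xi;v\rangle$ built into the definition of $\partial_{*}^{*}\Psi_i$, the geodesic identity~(\ref{e3}), and the elementary order fact that if $a,b\in\mathbb{R}^m$ satisfy $a\geqq b$ and $b\in\mathbb{R}_+^m\setminus\{0\}$ then $a\in\mathbb{R}_+^m\setminus\{0\}$ (and dually with $-\mathbb{R}_+^m\setminus\{0\}$).

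\emph{VOP $\Rightarrow$ Minty.} I would argue by contradiction: suppose $\bar p$ solves the VOP but not the Minty $\partial_{*}^{*}$-VVIP, so there are $q\in\mathcal{K}$ and $\xi\in\partial_{*}^{*}\Psi(q)$ with $\langle\xi;\exp_q^{-1}\bar p\rangle_m\in\mathbb{R}_+^m\setminus\{0\}$. Applying $\partial_{*}^{*}$-convexity of $\Psi$ at the point $q$ to the target $\bar p$ gives $\Psi(\bar p)-\Psi(q)\geqq\langle\xi;\exp_q^{-1}\bar p\rangle_m$, hence by the order fact $\Psi(\bar p)-\Psi(q)\in\mathbb{R}_+^m\setminus\{0\}$, i.e. $\Psi(q)-\Psi(\bar p)\in-\mathbb{R}_+^m\setminus\{0\}$, contradicting efficiency of $\bar p$. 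This direction is routine and does not need Proposition~\ref{prop3.6}.

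\emph{Minty $\Rightarrow$ VOP.} Again by contradiction: suppose $\bar p$ solves the Minty $\partial_{*}^{*}$-VVIP but is not efficient, so there is $\tilde p\in\mathcal{K}$ with $\Psi(\tilde p)-\Psi(\bar p)\in-\mathbb{R}_+^m\setminus\{0\}$; fix an index $j$ with $\Psi_j(\tilde p)<\Psi_j(\bar p)$. Put $z(\mu):=\exp_{\bar p}(\mu\exp_{\bar p}^{-1}\tilde p)$, which lies in $\mathcal{K}$ for $\mu\in[0,1]$ by geodesic convexity, and set $g_i(\mu):=\Psi_i(z(\mu))$. Applying Proposition~\ref{prop3.6} along sub-arcs of the geodesic shows that each $g_i$ is convex on $[0,1]$, that $g_i(\mu)\leq g_i(0)$ for all $\mu$, and that $g_j(\mu)<g_j(0)$ for $\mu\in(0,1]$. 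A short one-variable convexity argument then yields $\bar\mu>0$ such that for every $\mu\in(0,\bar\mu]$ the right derivatives satisfy $(g_i)'_{+}(\mu)\leq 0$ for all $i$ and $(g_j)'_{+}(\mu)<0$ — otherwise some $g_i$ would eventually strictly exceed $g_i(0)$. Fix such a $\mu$, put $q:=z(\mu)$ and $v:=\dot z(\mu)$; since $\exp_q(tv)=z(\mu+t)$ one has $\Psi_i^{+}(q;v)=(g_i)'_{+}(\mu)$. By the lower-convexificator inequality and compactness of $\partial_{*}^{*}\Psi_i(q)$ (closed and bounded) the infimum is attained, so there is $\xi_i^{*}\in\partial_{*}^{*}\Psi_i(q)$ with $\langle\xi_i^{*};v\rangle\leq(g_i)'_{+}(\mu)\leq 0$, strictly negative for $i=j$. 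Finally, identity~(\ref{e3}) gives $v=-\frac{1}{\mu}\exp_{q}^{-1}\bar p$, so $\xi^{*}:=(\xi_1^{*},\dots,\xi_m^{*})\in\partial_{*}^{*}\Psi(q)$ satisfies $\langle\xi^{*};\exp_{q}^{-1}\bar p\rangle_m=-\mu\langle\xi^{*};v\rangle_m\in\mathbb{R}_+^m\setminus\{0\}$, contradicting the Minty condition at $q$.

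The main obstacle is in this second implication, and it is one of placement. The symmetric analogue of the first implication would test the Minty condition at $\tilde p$ itself, but $\partial_{*}^{*}$-convexity only bounds $\langle\xi;\exp_{\tilde p}^{-1}\bar p\rangle_m$ from above, by the nonnegative vector $\Psi(\bar p)-\Psi(\tilde p)$, which yields no sign information; one is instead forced onto a point $z(\mu)$ arbitrarily close to $\bar p$, where the objective genuinely decreases in the forward geodesic direction. Carrying this out cleanly requires (i) a single parameter $\mu$ that works simultaneously for all $m$ components — which is where the convexity of each $g_i$ supplied by Proposition~\ref{prop3.6} is essential — and (ii) upgrading the infimum in the lower-convexificator inequality to an actual element of $\partial_{*}^{*}\Psi_i(q)$, which is precisely where the boundedness (together with closedness) of the convexificators is used.
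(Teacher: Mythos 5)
Your proof is correct, and while your first implication (VOP $\Rightarrow$ Minty) coincides with the paper's converse step --- apply $\partial_{*}^{*}$-convexity at the offending point $\tilde p$ and use the order fact --- your argument for the harder direction (Minty $\Rightarrow$ VOP) is genuinely different from the paper's. The paper invokes the Mean Value Theorem (Theorem~\ref{[Mean Value Theorem]}) to produce, for each component $i$, a parameter $\hat\lambda_i\in(0,\lambda)$ and an element $\hat\xi_i\in\mathrm{co}\,\partial_{*}^{*}\Psi_i(p(\hat\lambda_i))$, and then runs a case analysis (all $\hat\lambda_i$ equal versus not), using the monotonicity of $\partial_{*}^{*}\Psi$ from Theorem~\ref{theorem3.4} to transport the inequalities to a common parameter $\bar\lambda=\min_i\hat\lambda_i$. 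You instead scalarize: Proposition~\ref{prop3.6} makes each $g_i=\Psi_i\circ z$ convex on $[0,1]$, elementary one-variable convexity produces a \emph{single} $\mu$ at which all right derivatives are $\leq 0$ with one strictly negative, and the lower-convexificator inequality together with compactness of $\partial_{*}^{*}\Psi_i(z(\mu))$ turns this into an actual element $\xi^{*}\in\partial_{*}^{*}\Psi(z(\mu))$ violating the Minty condition. Your route buys two things: it bypasses the delicate alignment of the $m$ mean-value parameters (the paper's Case~2, whose transported inequalities are asserted rather than derived), and it produces a witness lying in $\partial_{*}^{*}\Psi(q)$ itself rather than in its convex hull --- a distinction the paper elides, since the Minty problem is quantified over $\partial_{*}^{*}\Psi(q)$, not $\mathrm{co}\,\partial_{*}^{*}\Psi(q)$. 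The price is that you genuinely need closedness \emph{and} boundedness of the convexificators to attain the infimum (essential for the components where the right derivative vanishes), but those hypotheses are already in the theorem statement. Your diagnosis of why the naive symmetric argument at $\tilde p$ fails, and why one must move to points near $\bar p$, is exactly right.
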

\begin{proof}
	Contrary suppose that $\bar{p}$ is not a vector minimal point of the VOP. Then, $\exists~\tilde{p}\in \mathcal{K}$ such that 
	\begin{equation}\label{e10}
		\Psi(\tilde{p})-\Psi(\bar{p})\in -\mathbb{R}^{m}_{+}\setminus\{0\}.
	\end{equation}
	By geodesic convexity of $ \mathcal{K} $, $ p(\lambda):=\exp_{\bar{p}}\lambda\exp_{\bar{p}}^{-1}\tilde{p}\in \mathcal{K} $ for any $\lambda\in [0,1]$.\\ Since, $\Psi$ is $\partial_{*}^{*}$-convex on $\mathcal{K}$, by [Proposition\ref{prop3.6}]. We have
	\begin{equation*}
		\Psi(\exp_{\bar{p}}\lambda\exp_{\bar{p}}^{-1}\tilde{p})-\Psi(\bar{p})\leqq\lambda(\Psi(\tilde{p})-\Psi(\bar{p}));
	\end{equation*}
	or equivalently, for any $ i\in M~\mbox{and}~\lambda\in(0,1)$, one has
	\begin{equation*}
		\Psi_{i}(\exp_{\bar{p}}\lambda\exp_{\bar{p}}^{-1}\tilde{p})-\Psi_{i}(\bar{p})\leqq\lambda(\Psi_{i}(\tilde{p})-\Psi_{i}(\bar{p})).
	\end{equation*}
	By Theorem \ref{[Mean Value Theorem]}, for any $ i\in M $, $\exists~ \hat\lambda_{i}\in (0,\lambda) $ and $\hat\xi_{i}\in\mbox{co}\partial_{*}^{*}\Psi(p(\hat\lambda_{i}))$ such that
	\begin{equation*}
		\Psi_{i}(\exp_{\bar{p}}\lambda\exp_{\bar{p}}^{-1}\tilde{p})-\Psi_{i}(\bar{p})=\left\langle\hat\xi_{i};\lambda P_{p(\hat\lambda_{i}),\bar{p}}\exp_{\bar{p}}^{-1}\tilde{p}\right\rangle;
	\end{equation*}
	which implies that, for any $ i\in M $, we have
	\begin{equation}\label{e11}
		\left\langle\hat\xi_{i}; P_{p(\hat\lambda_{i}),\bar{p}}\exp_{\bar{p}}^{-1}\tilde{p}\right\rangle\leq\Psi_{i}(\tilde{p})-\Psi_{i}(\bar{p}).
	\end{equation}
	Now, there are two possible cases:\\
	\noindent
	{\bf Case(1):}  When $ \hat\lambda_{1}=\hat\lambda_{2}=...=\hat\lambda_{m}=\hat\lambda $. Multiplying both side of (\ref{e11}) by $ \hat\lambda $, for any $ i\in M $ and $ \hat\xi\in \partial_{*}^{*}\Psi(p(\hat\lambda)) $ one has
	\begin{equation*}
		\left\langle\hat\xi_{i}; P_{p(\hat\lambda_{i}),\bar{p}}\exp_{\bar{p}}^{-1}p(\hat\lambda)\right\rangle\leq\hat\lambda(\Psi_{i}(\tilde{p})-\Psi_{i}(\bar{p})).
	\end{equation*}
	From (\ref{e10}), some $ p(\hat\lambda)\in \mathcal{K} $ and $ \hat\xi\in\mbox{co}\partial_{*}^{*}\Psi(p(\hat\lambda)) $, one has
	\begin{equation*}
		\left\langle\hat\xi;\exp_{p(\hat\lambda)}^{-1}\bar{p}\right\rangle_{m}\in\mathbb{R}^{m}_{+}\setminus\{0\}.
	\end{equation*}
	Which is a contradiction to the fact that $\bar{p}$ is a solution of the Minty $\partial_{*}^{*}$-VVI.\\
	
	\noindent
	{\bf Case(2):} When  $ \hat\lambda_{1},\hat\lambda_{2},...,\hat\lambda_{m}$ are not all equal. Without loss of generality, we take $ \hat\lambda_{1}\neq\hat\lambda_{2} $. Then, from (\ref{e6}), for some $ \hat\xi_{1}\in\mbox{co}\partial_{*}^{*}\Psi_{1}(p(\hat\lambda_{1})) $ and $ \hat\xi_{2}\in\mbox{co}\partial_{*}^{*}\Psi_{2}(p(\hat\lambda_{2})) $, one has
	\begin{equation*}
		\left\langle\hat\xi_{1}; P_{p(\hat\lambda_{1}),\bar{p}}\exp_{\bar{p}}^{-1}\tilde{p}\right\rangle\leq\Psi_{1}(\tilde{p})-\Psi_{1}(\bar{p});
	\end{equation*}
	and
	\begin{equation*}
		\left\langle\hat\xi_{2}; P_{p(\hat\lambda_{2}),\bar{p}}\exp_{\bar{p}}^{-1}\tilde{p}\right\rangle\leq\Psi_{2}(\tilde{p})-\Psi_{2}(\bar{p}).
	\end{equation*}
	Since $ \Psi_{1} $ and $ \Psi_{2} $ are $\partial_{*}^{*}$-convex on $ \mathcal{K} $, by Theorem \ref{theorem3.4}, for any $ \hat\xi_{12}\in\mbox{co}\partial_{*}^{*}\Psi_{1}(p(\hat\lambda_{1})) $ and $ \hat\xi_{21}\in\mbox{co}\partial_{*}^{*}\Psi_{2}(p(\hat\lambda_{2})) $, one has
	\begin{equation*}
		\left\langle\hat\xi_{1}-\hat\xi_{12};\exp_{p(\hat\lambda_{1})}^{-1}p(\hat\lambda_{2})\right\rangle\geq0;
	\end{equation*}
	and
	\begin{equation*}
		\left\langle\hat\xi_{2}-\hat\xi_{21},\exp_{p(\hat\lambda_{2})}^{-1}p(\hat\lambda_{1})\right\rangle\geq0.
	\end{equation*}
	If $ \hat\lambda_{1}-\hat\lambda_{2}>0 $, it follows that
	\begin{equation*}
		\left\langle\hat\xi_{12}; P_{p(\hat\lambda_{1}),\bar{p}}\exp_{\bar{p}}^{-1}\tilde{p}\right\rangle\leq\Psi_{1}(\tilde{p})-\Psi_{1}(\bar{p}).
	\end{equation*}
	If  $ \hat\lambda_{2}-\hat\lambda_{1}>0 $, it follows that
	\begin{equation*}
		\left\langle\hat\xi_{21}; P_{x(\hat\lambda_{2}),\bar{p}}\exp_{\bar{p}}^{-1}\tilde{p}\right\rangle\leq\Psi_{2}(\tilde{p})-\Psi_{2}(\bar{p}).
	\end{equation*}
	Therefore, for $ \hat\lambda_{1}\neq\hat\lambda_{2} $, setting $\hat\lambda:=\{\hat\lambda_{1},\hat\lambda_{2}\}$, for any $ i=1,2 $, $\exists~ \hat\xi_{i}\in \mbox{co}\partial_{*}^{*}\Psi_{i}(p(\hat\lambda)) $ such that
	\begin{equation*}
		\left\langle\hat\xi_{i};   P_{p(\hat\lambda),\bar{p}}\exp_{\bar{p}}^{-1}\tilde{p}\right\rangle\leq\Psi_{i}(\tilde{p})-\Psi_{i}(\bar{p}).
	\end{equation*}
	On continuing the above process, we get $ \bar\lambda\in (0,\lambda) $ such that $\bar\lambda:=\min\{\hat\lambda_{1},\hat\lambda_{2},...,\hat\lambda_{m}\}$ and  $ \bar\xi_{i}\in \mbox{co}\partial_{*}^{*}\Psi_{i}(p(\bar\lambda))$ such that
	\begin{equation*}
		\left\langle\bar\xi_{i};   P_{p(\bar\lambda),\bar{p}}\exp_{\bar{p}}^{-1}\tilde{p}\right\rangle\leq\Psi_{i}(\tilde{p})-\Psi_{i}(\bar{p})~~\forall ~i\in M.
	\end{equation*}
	Multiplying the above inequality by $ \bar\lambda $, one has
	\begin{equation*}
		\left\langle\bar\xi_{i};   -\exp_{p(\bar\lambda)}^{-1}\bar{p}\right\rangle\leq\bar\lambda(\Psi_{i}(\tilde{p})-\Psi_{i}(\bar{p}))
	\end{equation*}
	By (\ref{e10}), for some $ p(\bar\lambda)\in \mathcal{K} $ and $ \bar\lambda:=(\bar\xi_{1},\bar\xi_{2},...,\bar\xi_{m})\in \partial_{*}^{*}\Psi(p(\bar\lambda)) $ one has
	\begin{equation*}
		\left\langle\bar\xi; \exp_{p(\bar\lambda)}^{-1}\bar{p}\right\rangle_{m}\in \mathbb{R}_{+}^{m}\setminus\{0\}.
	\end{equation*}
	Which contradicts to the fact that the Minty $\partial_{*}^{*}$-VVI.\\
	
	\noindent
	For converse, suppose that $\bar{x}$ is not a solution of the Minty $\partial_{*}^{*}$-VVI. Then, $\exists~\tilde{p}\in \mathcal{K}$ and $\xi\in\partial_{*}^{*}\Psi(\tilde{p})$ such that
	\begin{equation*}
		\left\langle\xi;\exp_{\tilde{p}}^{-1}\bar{p}\right\rangle_{m}\in \mathbb{R}_{+}^{m}\setminus\{0\}.
	\end{equation*}
	By $\partial_{*}^{*}$-convexity of $\Psi$ on $ \mathcal{K} $, we have 
	\begin{equation*}
		\Psi(\tilde{p})-\Psi(\bar{p})\in -\mathbb{R}_{+}^{m}\setminus\{0\};
	\end{equation*}
	a contradiction to the fact that $\bar{p}$ is solution of the VOP.
\end{proof}

\noindent
\section{Weak vector variational inequalities using convexificators}
\noindent
In this section, we first consider the weak formulations of the Stampacchia and Minty $\partial_{*}^{*}$-VVI and establish their relations with the weak minimal point of VOP.\\

\noindent
{\bf Stampacchia $\partial_{*}^{*}$-WVVI:} Find $\bar{p}\in \mathcal{K}$ such that, for any $ q\in \mathcal{K} $, $\exists~\xi\in \partial_{*}^{*}\Psi(\bar{p})$ such that
\begin{equation*}
	\left\langle\xi;\exp_{\bar{p}}^{-1}q\right\rangle_{m}\notin-int\mathbb{R}_{+}^{m}.
\end{equation*}
{\bf Minty $\partial_{*}^{*}$-VVI:} Find $\bar{p}\in \mathcal{K}$ such that for any $ q\in \mathcal{K} $ and $\xi\in\partial_{*}^{*}\Psi(q)$, one has
\begin{equation*}
	\left\langle\xi;\exp_{q}^{-1}\bar{p}\right\rangle_{m}\notin int\mathbb{R}_{+}^{m}.
\end{equation*}
The following theorem gives a necessary and sufficient condition for a point to be weak vector minimal point of the VOP in terms of the Stampacchia $\partial_{*}^{*}$-WVVI.
\begin{theorem}\label{theorem5.1}
	Suppose $ \mathcal{K}(\neq\phi)\subseteq\mathbb{H} $ is a GC set and $\Psi:\mathcal{K}\longrightarrow \mathbb{R}^{m}$ be a function such that $ \Psi_{i}:\mathcal{K}\longrightarrow \mathbb{R} $ are locally Lipschitz at point $\bar{p}\in \mathcal{K}$ and admit a bounded convexificators $\partial_{*}^{*}\Psi_{i}(\bar{p})$, $\forall~ i\in M=\{1,2,...,m\}$. Also suppose that $\Psi$ is $\partial_{*}^{*}$-convex on $ \mathcal{K} $. Then, $\bar{p}$ is a weak vector minimal point of the VOP iff $\bar{p}$ is a solution of the Stampacchia $\partial_{*}^{*}$-WVVI. 
\end{theorem}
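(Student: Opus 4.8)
The plan is to establish the equivalence by proving the two implications separately, each by contraposition. The converse implication (a solution of the Stampacchia $\partial_{*}^{*}$-WVVI is a weak vector minimal point of the VOP) is a short variant of the earlier theorems in this section, while the direct implication (weak minimality $\Rightarrow$ Stampacchia $\partial_{*}^{*}$-WVVI) carries essentially all the difficulty.

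For the converse I would suppose that $\bar{p}$ is not a weak vector minimal point, so there is $\tilde{p}\in\mathcal{K}$ with $\Psi(\tilde{p})-\Psi(\bar{p})\in -int\mathbb{R}_{+}^{m}$. Applying $\partial_{*}^{*}$-convexity of $\Psi$ at $\bar{p}$ with the point $\tilde{p}$, every $\xi\in\partial_{*}^{*}\Psi(\bar{p})$ satisfies $\left\langle\xi;\exp_{\bar{p}}^{-1}\tilde{p}\right\rangle_{m}\leqq\Psi(\tilde{p})-\Psi(\bar{p})$; since $a\leqq b$ with $b\in -int\mathbb{R}_{+}^{m}$ forces $a\in -int\mathbb{R}_{+}^{m}$, it follows that $\left\langle\xi;\exp_{\bar{p}}^{-1}\tilde{p}\right\rangle_{m}\in -int\mathbb{R}_{+}^{m}$ for \emph{every} $\xi\in\partial_{*}^{*}\Psi(\bar{p})$. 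Taking $q=\tilde{p}$ in the definition of the Stampacchia $\partial_{*}^{*}$-WVVI then shows no admissible $\xi$ exists, contradicting that $\bar{p}$ solves it.

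For the direct implication, suppose $\bar{p}$ is a weak vector minimal point but fails to solve the Stampacchia $\partial_{*}^{*}$-WVVI. Then there is $q\in\mathcal{K}$ such that, writing $v:=\exp_{\bar{p}}^{-1}q$, one has $\left\langle\xi;v\right\rangle_{m}\in -int\mathbb{R}_{+}^{m}$ for every $\xi\in\partial_{*}^{*}\Psi(\bar{p})$, i.e.\ $\left\langle\xi_{i};v\right\rangle<0$ for all $i\in M$ and all $\xi_{i}\in\partial_{*}^{*}\Psi_{i}(\bar{p})$. Here the boundedness hypothesis enters: each $\partial_{*}^{*}\Psi_{i}(\bar{p})$ is nonempty, closed and bounded, hence compact, so the continuous functional $\xi_{i}\mapsto\left\langle\xi_{i};v\right\rangle$ attains its supremum $\alpha_{i}$ on it, and $\alpha_{i}<0$. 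Since $\partial_{*}^{*}\Psi_{i}(\bar{p})$ is in particular an upper convexificator of $\Psi_{i}$ at $\bar{p}$, this yields $\Psi_{i}^{-}(\bar{p};v)\leq\alpha_{i}<0$ for every $i$. Then I would invoke Proposition \ref{prop3.6}: $\partial_{*}^{*}$-convexity forces each $\Psi_{i}$ to be geodesically convex, so $t\mapsto\Psi_{i}(\exp_{\bar{p}}(tv))$ is convex on $[0,1]$, its difference quotient $t\mapsto t^{-1}(\Psi_{i}(\exp_{\bar{p}}(tv))-\Psi_{i}(\bar{p}))$ is non-decreasing, and its limit as $t\downarrow 0$ equals $\Psi_{i}^{-}(\bar{p};v)=\inf_{t>0} t^{-1}(\Psi_{i}(\exp_{\bar{p}}(tv))-\Psi_{i}(\bar{p}))<0$. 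Hence for each $i$ there is $t_{i}\in(0,1]$ with $\Psi_{i}(\exp_{\bar{p}}(t_{i}v))<\Psi_{i}(\bar{p})$, and by monotonicity the strict inequality persists for all $t\in(0,t_{i}]$; putting $t^{*}:=\min_{i\in M}t_{i}>0$ and $\tilde{p}:=\exp_{\bar{p}}(t^{*}v)\in\mathcal{K}$ (geodesic convexity) gives $\Psi(\tilde{p})-\Psi(\bar{p})\in -int\mathbb{R}_{+}^{m}$, contradicting weak minimality of $\bar{p}$.

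I expect the main obstacle to be exactly this last conversion. The upper-convexificator inequality only bounds the \emph{lower} Dini derivative $\Psi_{i}^{-}$, which is a priori merely a $\liminf$, so without extra structure one cannot produce a single parameter $t^{*}$ at which all $m$ components decrease at once; convexity (Proposition \ref{prop3.6}) is precisely what turns that $\liminf$ into a monotone limit realised along an initial sub-arc of the geodesic. A smaller but necessary point is the use of compactness — i.e.\ the boundedness assumption on the convexificators — to upgrade ``$\left\langle\xi_{i};v\right\rangle<0$ for every $\xi_{i}$'' to the strict bound ``$\sup_{\xi_{i}}\left\langle\xi_{i};v\right\rangle<0$''; everything else is routine.
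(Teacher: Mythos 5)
Your proof is correct, and while your converse implication coincides with the paper's (contradiction via the $\partial_{*}^{*}$-convexity inequality $\left\langle\xi;\exp_{\bar{p}}^{-1}\tilde{p}\right\rangle_{m}\leqq\Psi(\tilde{p})-\Psi(\bar{p})\in-int\mathbb{R}_{+}^{m}$), your forward implication takes a genuinely different route. The paper argues directly: weak minimality gives $\lambda^{-1}\bigl(\Psi(\exp_{\bar{p}}\lambda\exp_{\bar{p}}^{-1}q)-\Psi(\bar{p})\bigr)\notin-int\mathbb{R}_{+}^{m}$ for every $\lambda$, passes to the componentwise $\liminf$ to get $\Psi^{-}(\bar{p};\exp_{\bar{p}}^{-1}q)\notin-int\mathbb{R}_{+}^{m}$, and then extracts $\bar{\xi}$ from the upper-convexificator inequality plus boundedness. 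You argue by contraposition: failure of the Stampacchia $\partial_{*}^{*}$-WVVI plus compactness gives $\sup_{\xi_{i}}\left\langle\xi_{i};v\right\rangle<0$, hence $\Psi_{i}^{-}(\bar{p};v)<0$ for \emph{every} $i$, and you then use Proposition \ref{prop3.6} to make the difference quotients monotone so that a single $t^{*}$ witnesses simultaneous strict decrease of all components. Your version is the more robust of the two: the paper's $\liminf$ step is delicate, since the index witnessing ``not in $-int\mathbb{R}_{+}^{m}$'' may vary with $\lambda$, and the componentwise $\liminf$ of vectors avoiding $-int\mathbb{R}_{+}^{m}$ need not avoid it; that step is really only justified by the same monotone-quotient argument from geodesic convexity that you invoke explicitly, whereas the paper never cites the convexity hypothesis in its forward direction. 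The price you pay is that your forward implication genuinely consumes the $\partial_{*}^{*}$-convexity assumption, while a correct direct argument in the paper's style would reveal that only the ``weak minimal $\Leftarrow$ WVVI'' half needs it in an essential way; both halves as written in the paper and by you end up using it.
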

\begin{proof}
	Suppose that $\bar{p}$ is a weak minimal point of the VOP. Then, form any $ q\in \mathcal{K} $, one has 
	\begin{equation*}
		\Psi(q)-\Psi(p)\notin-int\mathbb{R}_{+}^{m}.
	\end{equation*}
	By the geodesic convexity of $ \mathcal{K} $, for any $\lambda\in [0,1]$ and $ y\in \mathcal{K}  $, $\exp_{\bar{x}}\lambda\exp_{\bar{p}}^{-1}q\in \mathcal{K}$, which implies that 
	\begin{equation*}
		\frac{\Psi(\exp_{\bar{p}}\lambda\exp_{\bar{p}}^{-1}q)-\Psi(\bar{p})}{\lambda}\notin-int\mathbb{R}_{+}^{m};
	\end{equation*}
	taking limit inf as $\lambda\rightarrow 0^{+}$, we have
	\begin{equation*}
		\liminf_{\lambda\rightarrow 0^{+}}\frac{\Psi(\exp_{\bar{p}}\lambda\exp_{\bar{p}}^{-1}q)-\Psi(\bar{p})}{\lambda}\notin-int\mathbb{R}_{+}^{m};
	\end{equation*}
	\begin{equation*}
		\Psi^{-}(\bar{p};\exp_{\bar{p}}^{-1}q):=(\Psi^{-}_{1}(\bar{p};\exp_{\bar{p}}^{-1}q),\Psi_{2}^{-}(\bar{p};\exp_{\bar{p}}^{-1}q),...,\Psi_{m}^{-}(\bar{p};\exp_{\bar{p}}^{-1}q))\notin-int\mathbb{R}_{+}^{m},~\forall q\in \mathcal{K}.
	\end{equation*}
	Since, $ \Psi_{i} $ admit bounded convexificators $\partial_{*}^{*}\Psi_{i}(\bar{p})$ $\forall i\in M $, for any $ q\in \mathcal{K} $,  $\exists\bar{\xi}\in \partial_{*}^{*}\Psi_{i}(\bar{p})$ such that
	\begin{equation*}
		\left\langle\bar{\xi};\exp_{\bar{p}}^{-1}q\right\rangle_{m}\notin-int\mathbb{R}_{+}^{m}.
	\end{equation*} 
	Hence, $\bar{p}$ is a solution of the Stampacchia $\partial_{*}^{*}$-WVVI.\\
	
	\noindent
	For converse, contrary suppose $\bar{p}$ is not a weak vector minimal point of the VOP. Then  $\exists~\tilde{p}\in \mathcal{K}$ such that 
	\begin{equation*}
		\Psi(\tilde{p})-\Psi(\bar{p})\in -int\mathbb{R}_{+}^{m}.
	\end{equation*}
	By $\partial_{*}^{*}$-convexity of $\Psi$ at $\bar{p}$ over $ \mathcal{K} $, for any $\bar{\xi}\in \partial_{*}^{*}\Psi(\bar{p})$, one have
	\begin{equation*}
		\left\langle\bar{\xi};\exp_{\bar{p}}^{-1}p\right\rangle_{m}\in-int\mathbb{R}_{+}^{m}
	\end{equation*}
	a contradiction to the fact that $\bar{p}$ is a solution of the Stampacchia $\partial_{*}^{*}$-WVVI.
\end{proof}
\noindent
The following theorem gives the condition under which Stampacchia $\partial_{*}^{*}$-WVVI and Minty $\partial_{*}^{*}$-WVVI becomes equivalent.
\begin{theorem}
	Suppose $ \mathcal{K}(\neq\phi)\subseteq\mathbb{H} $ is a GC set and let $\Psi:\mathcal{K}\longrightarrow \mathbb{R}^{m}$ be a function such that $\Psi_{i}:K\longrightarrow\mathbb{R}$ are locally Lipschitz on $ \mathcal{K} $ and admit bounded convexificator $\partial_{*}^{*}\Psi_{i}(\bar{p})$ for any $ \bar{p}\in \mathcal{K} $,$\forall~ i\in M=\{1,2,...,m\} $. Also, suppose that $\Psi$ is $\partial_{*}^{*}$-convex on $ \mathcal{K} $. Then, $\bar{p}$ is solution of the Minty $\partial_{*}^{*}$-WVVI iff $\bar{p}$ is a solution of the Stampacchia $\partial_{*}^{*}$-WVVI. 
\end{theorem}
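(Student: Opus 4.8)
The plan is to prove the two implications separately, routing the harder direction through the weak vector minimal point of the VOP. \textbf{Stampacchia $\partial_{*}^{*}$-WVVI $\Rightarrow$ Minty $\partial_{*}^{*}$-WVVI.} Here I would mimic the argument of the Stampacchia/Minty proposition proved above. Assume $\bar{p}$ solves the Stampacchia $\partial_{*}^{*}$-WVVI, fix $q\in\mathcal{K}$ and $\zeta\in\partial_{*}^{*}\Psi(q)$, and choose $\xi\in\partial_{*}^{*}\Psi(\bar{p})$ with $\langle\xi;\exp_{\bar{p}}^{-1}q\rangle_{m}\notin-\mathrm{int}\,\mathbb{R}_{+}^{m}$. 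Since $\Psi$ is $\partial_{*}^{*}$-convex on $\mathcal{K}$, Theorem~\ref{theorem3.4} gives that $\partial_{*}^{*}\Psi$ is monotone, so $\langle P_{q,\bar{p}}\xi-\zeta;\exp_{q}^{-1}\bar{p}\rangle_{m}\geqq0$. Using that parallel transport along the minimal geodesic is an isometry and that $P_{q,\bar{p}}\exp_{\bar{p}}^{-1}q=-\exp_{q}^{-1}\bar{p}$ (take $s=1$ in (\ref{e3})), one gets $\langle P_{q,\bar{p}}\xi;\exp_{q}^{-1}\bar{p}\rangle_{m}=-\langle\xi;\exp_{\bar{p}}^{-1}q\rangle_{m}\notin\mathrm{int}\,\mathbb{R}_{+}^{m}$; combining this with the componentwise inequality $\langle\zeta;\exp_{q}^{-1}\bar{p}\rangle_{m}\leqq\langle P_{q,\bar{p}}\xi;\exp_{q}^{-1}\bar{p}\rangle_{m}$ forces $\langle\zeta;\exp_{q}^{-1}\bar{p}\rangle_{m}\notin\mathrm{int}\,\mathbb{R}_{+}^{m}$, i.e.\ $\bar{p}$ solves the Minty $\partial_{*}^{*}$-WVVI.

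\textbf{Minty $\partial_{*}^{*}$-WVVI $\Rightarrow$ Stampacchia $\partial_{*}^{*}$-WVVI.} For the converse I would first show that a solution $\bar{p}$ of the Minty $\partial_{*}^{*}$-WVVI is a weak vector minimal point of the VOP, and then invoke Theorem~\ref{theorem5.1}. To prove the first step, suppose $\bar{p}$ is \emph{not} weakly minimal, so there is $\tilde{p}\in\mathcal{K}$ with $\Psi(\tilde{p})-\Psi(\bar{p})\in-\mathrm{int}\,\mathbb{R}_{+}^{m}$. Put $p(\lambda):=\exp_{\bar{p}}(\lambda\exp_{\bar{p}}^{-1}\tilde{p})\in\mathcal{K}$ for $\lambda\in[0,1]$ by geodesic convexity. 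By Proposition~\ref{prop3.6}, $\Psi_{i}(p(\lambda))-\Psi_{i}(\bar{p})\leq\lambda(\Psi_{i}(\tilde{p})-\Psi_{i}(\bar{p}))$ for every $i\in M$; applying the Mean Value Theorem~\ref{[Mean Value Theorem]} componentwise and carrying out the same Case~1 / Case~2 reduction (according to whether the mean value parameters $\hat\lambda_{1},\dots,\hat\lambda_{m}$ coincide or not) as in the proof of the Minty $\partial_{*}^{*}$-VVIP theorem above, one obtains $\bar\lambda\in(0,\lambda)$ and $\bar\xi\in\partial_{*}^{*}\Psi(p(\bar\lambda))$ with $\langle\bar\xi;\exp_{p(\bar\lambda)}^{-1}\bar{p}\rangle_{m}\in\mathrm{int}\,\mathbb{R}_{+}^{m}$, contradicting the Minty $\partial_{*}^{*}$-WVVI. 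Thus $\bar{p}$ is a weak vector minimal point, and Theorem~\ref{theorem5.1} then gives that $\bar{p}$ solves the Stampacchia $\partial_{*}^{*}$-WVVI. (Alternatively one can bypass the VOP and argue directly: apply the Minty condition at $p(\lambda)=\exp_{\bar{p}}(\lambda\exp_{\bar{p}}^{-1}q)$, use $\exp_{p(\lambda)}^{-1}\bar{p}=-\lambda P_{p(\lambda),\bar{p}}\exp_{\bar{p}}^{-1}q$ to divide out $\lambda>0$, and pass to the limit $\lambda\downarrow0$.)

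\textbf{Main obstacle.} The genuinely delicate direction is the converse. The heaviest bookkeeping is the Case~2 argument when $\hat\lambda_{1},\dots,\hat\lambda_{m}$ are not all equal, where one repeatedly combines the mean value identities with monotonicity (Theorem~\ref{theorem3.4}) to replace the several parameters by a single $\bar\lambda$; moreover the Mean Value Theorem~\ref{[Mean Value Theorem]} returns an element of $\mathrm{co}\,\partial_{*}^{*}\Psi_{i}$ rather than of $\partial_{*}^{*}\Psi_{i}$ itself, so the final step needs care about which set the contradiction with the Minty $\partial_{*}^{*}$-WVVI is drawn from. If the direct limiting route is taken instead, the obstacle becomes showing that the limit of $\xi_{\lambda}\in\partial_{*}^{*}\Psi(p(\lambda))$ as $\lambda\downarrow0$ lies in $\partial_{*}^{*}\Psi(\bar{p})$, which requires boundedness of the convexificators near $\bar{p}$ together with a suitable closedness/upper semicontinuity of $p\mapsto\partial_{*}^{*}\Psi(p)$.
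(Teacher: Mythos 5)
Your proof is correct in substance, but one of the two directions is argued quite differently from the paper. For Stampacchia $\Rightarrow$ Minty you use exactly the paper's device: $\partial_{*}^{*}$-convexity gives monotonicity via Theorem~\ref{theorem3.4}, and the identity $P_{q,\bar{p}}\exp_{\bar{p}}^{-1}q=-\exp_{q}^{-1}\bar{p}$ (from (\ref{e3}) with $s=1$) transfers the Stampacchia inequality at $\bar{p}$ into the Minty one at $q$; your write-up is in fact more explicit than the paper's about why ``at least one component $\geq 0$'' survives the componentwise comparison. For Minty $\Rightarrow$ Stampacchia the paper does \emph{not} go through the VOP: it takes $q_{k}=\exp_{\bar{p}}(\lambda_{k}\exp_{\bar{p}}^{-1}q)$ with $\lambda_{k}\downarrow 0$, applies the Minty condition at each $q_{k}$, and uses boundedness of the convexificators to extract a convergent subsequence $\xi_{k}\to\xi$, claiming $\xi\in\partial_{*}^{*}\Psi(\bar{p})$ by ``closedness'' — this is precisely your parenthetical alternative, and you correctly identify its weak spot: closedness of each set $\partial_{*}^{*}\Psi_{i}(p)$ does not by itself give the needed closedness of the graph of $p\mapsto\partial_{*}^{*}\Psi_{i}(p)$, so the paper's route tacitly assumes an upper semicontinuity that is never stated. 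Your primary route (Minty WVVI $\Rightarrow$ weak minimality of the VOP via Proposition~\ref{prop3.6}, the Mean Value Theorem~\ref{[Mean Value Theorem]} and the Case~1/Case~2 reduction, then Theorem~\ref{theorem5.1} to get back to Stampacchia) avoids that semicontinuity assumption entirely and is not circular, since Theorem~\ref{theorem5.1} is established independently; the price is inheriting the bookkeeping of the earlier Minty $\partial_{*}^{*}$-VVIP proof.

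One point you flag but should close: the Mean Value Theorem delivers $\bar\xi_{i}\in\mathrm{co}\,\partial_{*}^{*}\Psi_{i}(p(\bar\lambda))$, whereas the Minty $\partial_{*}^{*}$-WVVI is violated only by an element of $\partial_{*}^{*}\Psi(p(\bar\lambda))$ itself. This is repairable: if $\bar\xi_{i}=\sum_{j}t_{j}\eta_{j}$ with $\eta_{j}\in\partial_{*}^{*}\Psi_{i}(p(\bar\lambda))$ and $\langle\bar\xi_{i};\exp_{p(\bar\lambda)}^{-1}\bar{p}\rangle>0$, then some $\eta_{j(i)}$ satisfies $\langle\eta_{j(i)};\exp_{p(\bar\lambda)}^{-1}\bar{p}\rangle>0$, and since $\partial_{*}^{*}\Psi(p(\bar\lambda))$ is the Cartesian product of the componentwise convexificators, the tuple $(\eta_{j(1)},\dots,\eta_{j(m)})$ furnishes the required contradiction. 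With that sentence added, your argument is complete.
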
 
\begin{proof}
	Suppose that $\bar{p}$ is a solution of the Minty $\partial_{*}^{*}$-WVVI, consider any sequence \{$\lambda_{k}$\}$\subset (0,1]$ such that $\lambda_{k}\rightarrow 0$ as $ k\rightarrow \infty$. By the geodesic convexity of $ \mathcal{K}$, for any $ q\in \mathcal{K} $, $ q_{k}=\exp_{\bar{p}}\lambda_{k}\exp_{\bar{p}}^{-1}q\in \mathcal{K} $. Since $\bar{p}$ is the solution of Minty $\partial_{*}^{*}$-WVVI, $\exists~\xi_{k}\in\partial_{*}^{*}\Psi(q_{k})$
	\begin{equation*}
		\left\langle\xi_{k};\exp_{q_{k}}^{-1}\bar{p}\right\rangle_{m}\notin int\mathbb{R}_{+}^{m}.
	\end{equation*}
	Since, $\Psi_{i}$ are locally Lipschitz and admit bounded convexificators on $ \mathcal{K} $ for all $ i\in M $, there exists $ d>0 $ such that $\|\xi_{k}\|\leq d$ which implies that the sequence $\{\xi_{k_{i}}\}\subset\partial_{*}^{*}\Psi_{i}(q_{k})$ converges to $\xi_{i}$ for all $ i\in M $. For any $ q\in \mathcal{K} $, the convexificator $\partial_{*}^{*}\Psi_{i}(q)$ are closed for all $ i\in M $. It follows that $ q_{k}\rightarrow q~\mbox{and}~\xi_{k_{i}}\rightarrow\xi_{i}~\mbox{as}~k\rightarrow\infty~\mbox{with}~\xi_{i}\in\partial_{*}^{*}\Psi_{i}(\bar{p})~\mbox{for all}~i\in M $. Therefore, for any $ y\in \mathcal{K} $, $\exists~ \xi\in\partial_{*}^{*}\Psi_{i}(\bar{p}) $ such that
	\begin{equation*}
		\left\langle\xi;\exp_{\bar{p}}^{-1}q\right\rangle_{m}\notin -int\mathbb{R}_{+}^{m}.
	\end{equation*} 
	Hence, $\bar{p}$ is a solution of the Stampacchia $\partial_{*}^{*}$-WVVI.\\
	For converse, suppose $\bar{p}$ is a solution of the Stampacchia  $\partial_{*}^{*}$-WVVI. Then, for any $ q\in \mathcal{K} $, $\exists~ \bar{\xi}\in \partial_{*}^{*}\Psi(\bar{p}) $ such that 
	\begin{equation*}
		\left\langle\bar\xi;\exp_{\bar{p}}^{-1}q\right\rangle_{m}\notin -int\mathbb{R}_{+}^{m}.
	\end{equation*}
	Since, $\Psi$ is $\partial_{*}^{*}$-convex on $ \mathcal{K} $, by Theorem \ref{theorem3.4}, we get $\partial_{*}^{*}\Psi$ is monotone on $ \mathcal{K} $, which implies
	\begin{equation*}
		\left\langle\xi;\exp_{q}^{-1}\bar{p}\right\rangle_{m}\notin int\mathbb{R}_{+}^{m}
	\end{equation*}
	for any $ q\in \mathcal{K}~\mbox{and}~\xi\in\partial_{*}^{*}\Psi(q) $. Hence, $\bar{p}$ is a solution of the Minty $\partial_{*}^{*}$-WVVI.
\end{proof}
\noindent
The following theorem gives the condition for a weak vector minimal point to be a vector minimal point of the VOP.
\begin{theorem}
	Suppose $ \mathcal{K}(\neq\phi)\subseteq\mathbb{H} $ is a GC set and $\Psi:\mathcal{K}\longrightarrow \mathbb{R}^{m}$ be a function such that $ \Psi_{i}:\mathcal{K}\longrightarrow \mathbb{R} $ are local Lipschitz at $\bar{p}\in \mathcal{K}$ and admit bounded convexificators $\partial_{*}^{*}\Psi_{i}(\bar{p})$, $\forall~ i\in M=\{1,2,...,m\} $. Also suppose that $ \Psi $ is strictly $\partial_{*}^{*}$-convex at $\bar{p}$ over $ \mathcal{K} $. Then, $\bar{p}$ is a vector minimal point of VOP iff $\bar{p}$ is a weak vector minimal point of the VOP.
\end{theorem}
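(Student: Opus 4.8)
The forward implication requires no work: it is precisely the Remark recorded just after the definition of (weakly) efficient solutions. Indeed $-int\mathbb{R}^{m}_{+}\subseteq -\mathbb{R}^{m}_{+}\setminus\{0\}$, so if $\bar p$ is a vector minimal point, then $\Psi(q)-\Psi(\bar p)\notin -\mathbb{R}^{m}_{+}\setminus\{0\}$ for every $q\in\mathcal{K}$, which a fortiori gives $\Psi(q)-\Psi(\bar p)\notin -int\mathbb{R}^{m}_{+}$, i.e. $\bar p$ is a weak vector minimal point. No convexity enters here, so the entire content of the theorem — and the only place the strict $\partial_{*}^{*}$-convexity is used — is the converse.

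For the converse the plan is to argue by contradiction. Suppose $\bar p$ is a weak vector minimal point but not a vector minimal point; then there is $\tilde p\in\mathcal{K}$ with $\Psi(\tilde p)-\Psi(\bar p)\in -\mathbb{R}^{m}_{+}\setminus\{0\}$, i.e. $\Psi_{i}(\tilde p)\le\Psi_{i}(\bar p)$ for all $i\in M$ with strict inequality for at least one index. The goal is to manufacture from $\tilde p$ a feasible point that is strictly dominated in \emph{every} coordinate, which contradicts weak efficiency. I would set $z(\lambda):=\exp_{\bar p}(\lambda\exp_{\bar p}^{-1}\tilde p)$ for $\lambda\in(0,1)$; by geodesic convexity of $\mathcal{K}$ one has $z(\lambda)\in\mathcal{K}$. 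Applying strict $\partial_{*}^{*}$-convexity of $\Psi$ (in the form of the strict analogue of Proposition \ref{prop3.6}, with base point $\bar p$ and second point $\tilde p$) coordinate-wise gives, for every $i\in M$ and every $\lambda\in(0,1)$,
\[
\Psi_{i}(z(\lambda))<\Psi_{i}(\bar p)+\lambda\bigl(\Psi_{i}(\tilde p)-\Psi_{i}(\bar p)\bigr)\le\Psi_{i}(\bar p),
\]
the last step using $\Psi_{i}(\tilde p)-\Psi_{i}(\bar p)\le0$ and $\lambda>0$. Hence $\Psi(z(\lambda))-\Psi(\bar p)\in -int\mathbb{R}^{m}_{+}$, contradicting that $\bar p$ is a weak vector minimal point.

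An essentially equivalent route, staying closer to the variational-inequality formalism of this part of the paper, is to note first, via strict $\partial_{*}^{*}$-convexity at $\bar p$, that for every $\xi^{*}\in\partial_{*}^{*}\Psi(\bar p)$ one has $\langle\xi_{i}^{*};\exp_{\bar p}^{-1}\tilde p\rangle<\Psi_{i}(\tilde p)-\Psi_{i}(\bar p)\le0$ for all $i$, so that $\langle\xi^{*};\exp_{\bar p}^{-1}\tilde p\rangle_{m}\in -int\mathbb{R}^{m}_{+}$; this says precisely that $\bar p$ is not a solution of the Stampacchia $\partial_{*}^{*}$-WVVI, and then Theorem \ref{theorem5.1} forces $\bar p$ not to be a weak vector minimal point, again a contradiction. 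I would present whichever of these two versions reads more smoothly in context.

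The main obstacle is bookkeeping rather than conceptual: the hypothesis here is strict $\partial_{*}^{*}$-convexity \emph{at the single point} $\bar p$ over $\mathcal{K}$, whereas the strict analogue of Proposition \ref{prop3.6} (and Theorem \ref{theorem5.1} as stated) are phrased under $\partial_{*}^{*}$-convexity \emph{on all of} $\mathcal{K}$. One therefore has to either isolate a pointwise ``fixed base point $\bar p$'' version of Proposition \ref{prop3.6}, or verify that the arguments there only ever invoke the convexity inequality at $\bar p$. The one genuinely essential use of strictness is that it upgrades $\Psi_{i}(\tilde p)-\Psi_{i}(\bar p)\le0$ to a \emph{strict} inequality in every coordinate of $\Psi(z(\lambda))-\Psi(\bar p)$: without strictness one only lands in $-\mathbb{R}^{m}_{+}\setminus\{0\}$, which is compatible with weak efficiency, so the implication indeed fails for merely $\partial_{*}^{*}$-convex $\Psi$.
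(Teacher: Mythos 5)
Your proposal is correct, and your second route is exactly the paper's proof: from $\Psi(\tilde p)-\Psi(\bar p)\leq 0$ componentwise with $\tilde p\neq\bar p$, strict $\partial_{*}^{*}$-convexity at $\bar p$ gives $\left\langle\bar\xi;\exp_{\bar p}^{-1}\tilde p\right\rangle_{m}\in-int\mathbb{R}_{+}^{m}$, so $\bar p$ fails the Stampacchia $\partial_{*}^{*}$-WVVI, and Theorem \ref{theorem5.1} then contradicts weak efficiency. (You even silently repair a slip in the paper, which writes $\Psi(\tilde p)-\Psi(\bar p)\in-int\mathbb{R}_{+}^{m}$ where non-efficiency only yields $-\mathbb{R}_{+}^{m}\setminus\{0\}$.)

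One caution about your first, more elementary route. You flag the bookkeeping issue yourself, but the resolution you suggest (``verify that the arguments there only ever invoke the convexity inequality at $\bar p$'') does not go through: the proof of Proposition \ref{prop3.6} and its strict analogue applies the convexity inequality at the \emph{intermediate} point $z(\lambda)$, taking $\zeta\in\partial_{*}^{*}\Psi(z(\lambda))$, not at the base point $\bar p$. Strict convexity at $\bar p$ alone only gives a \emph{lower} bound $\Psi(z(\lambda))-\Psi(\bar p)>\lambda\left\langle\xi^{*};\exp_{\bar p}^{-1}\tilde p\right\rangle_{m}$, which is useless for showing $\Psi_i(z(\lambda))<\Psi_i(\bar p)$. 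So your direct route needs strict $\partial_{*}^{*}$-convexity along the whole geodesic segment (effectively on $\mathcal{K}$), which is stronger than the theorem's hypothesis of strictness at the single point $\bar p$. The variational-inequality route is therefore not merely an equivalent stylistic choice here: it is the one that actually works under the stated hypothesis, since the direction of Theorem \ref{theorem5.1} you invoke (weak minimality implies Stampacchia $\partial_{*}^{*}$-WVVI) uses no convexity at all. I would present that version and drop or strengthen the hypotheses for the first.
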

\begin{proof}
	Obviously, every vector minimal point is also a weak minimal point of the VOP.\\
	Conversely, suppose that $\bar{p}$ is weak minimal point of the VOP but not a vector minimal point of the VOP. Then, $\exists~\tilde{p}\in \mathcal{K}$ such that
	\begin{equation*}
		\Psi(\tilde{p})-\Psi(\bar{p})\in -int\mathbb{R}_{+}^{m}.
	\end{equation*}
	By strict $\partial_{*}^{*}$-convexity of $\Psi$ at $\bar{p}$ over $ \mathcal{K} $, for any $\bar{\xi}\in \partial_{*}^{*}\Psi(\bar{p})$, one have
	\begin{equation*}
		\left\langle \bar{\xi}; \exp_{\bar{p}}^{-1}\tilde{p}\right\rangle_{m}\in -int\mathbb{R}_{+}^{m}
	\end{equation*}
	which implies that $\bar{p}$ is not a solution of Stampacchia $\partial_{*}^{*}$-WVVI. By Theorem\ref{theorem5.1}, $\bar{p}$ is not a weak vector minimal point of the VOP. This contradiction leads to the results. 
\end{proof}

\section{Conclusion}
In this paper, we have formulated the concept of convexificators for Hadamard manifolds which is a weaker version of the notion of sub-differentials and proved the mean value theorem for them. Furthermore, we discussed the characterizations of the $\partial_{*}^{*}$-convex functions in terms of monotonicity. We defined Stampacchia and Minty type $\partial_{*}^{*}$-VVI using convexificators and construct an example in support of their existence and also established the relationships between their solutions and vector minimal point of VOP.\\
\noindent
Due to the use of convexificators, the  results of this paper are precise and more general than the similar results already presented in the literature. Nevertheless, the existence results of $\partial_{*}^{*}$-VVI is still an open problem. They also may be extended on Riemannian manifolds using some more assumptions. Moreover, some related problems like fixed point problems, complementarity problem and equilibrium problems using the concept of convexificators can be explored in the future.

\bibliography{bibliography}
\bibliographystyle{apalike}

\end{document}